\documentclass[11pt, a4paper]{amsart}
\usepackage[margin=1in]{geometry}
\usepackage{tikz-cd}
\usepackage{amsmath,amssymb,amscd,amsthm}
\usepackage[T1]{fontenc}
\usepackage[utf8]{inputenc} 
\usepackage{mathrsfs}
\usepackage{mathtools}
\usepackage{hyperref}

\usepackage[all,cmtip]{xy}
\xyoption{all}
\input{xypic}

\renewcommand{\phi}{\varphi}

\renewcommand{\H}{\mathbb{H}}

\let\emptyset\varnothing

\newcommand{\C}{\mathbb{C}}

\newcommand{\N}{\mathbb{N}}

\newcommand{\R}{\mathbb{R}}

\newcommand{\Z}{\mathbb{Z}}

\newcommand{\RGC}{\mathcal{G}_\mathbb{C}^{\, \tau}}

\newcommand{\ov}{\overline}

\theoremstyle{plain}

\newtheorem{teorema}{Theorem}[section]
\newtheorem{prop}[teorema]{Proposition}
\newtheorem{corollary}[teorema]{Corollary}

\newtheorem{lemma}[teorema]{Lemma}

\theoremstyle{remark}

\newtheorem{remark}[teorema]{Remark}

\theoremstyle{definition}

\theoremstyle{plain}

\newtheorem{case}{Case}

\usepackage{tikz}
\usetikzlibrary{decorations.pathmorphing,arrows.meta}

\DeclareMathOperator{\Hom}{Hom}

\DeclareMathOperator{\de}{deg}
\DeclareMathOperator{\rk}{rk}

\DeclareMathOperator{\Gl}{GL}
\DeclareMathOperator{\End}{End}
\DeclareMathOperator{\Imm}{Im}

\DeclareMathOperator{\gr}{gr}

\DeclareMathOperator{\Gal}{Gal}

\DeclareMathOperator{\Pic}{Pic}
\DeclareMathOperator{\Spec}{Spec}
\DeclareMathOperator{\MD}{M_{Dol}}
\DeclareMathOperator{\MB}{M_{B}}

\renewcommand{\geq}{\geqslant}
\renewcommand{\leq}{\leqslant}

\title[Connected components of real loci]{Connected components of real loci in moduli spaces of vector and Higgs bundles over a Klein surface}

\author{Florent Schaffhauser}
\address{Universität Heidelberg, Institut f\"ur Mathematik, Im Neuenheimer Feld 205, 69120 Heidelberg}
\email{f.schaffhauser@mathi.uni-heidelberg.de}
\author{Tommaso Scognamiglio}
\address{Università di Bologna, Instituto Nazionale di Alta Matematica, Via Zamboni 33, 40126 Bologna}
\email{scognamiglio@altamatematica.it}

\counterwithin{equation}{section}

\begin{document}

\begin{abstract}
Let $X$ be a Riemann surface of genus $g \geqslant 2$ and let $\sigma : X \to X$ be an antiholomorphic involution on $X$. Let $\mathcal{N}(r,d)$ be the moduli space of semistable vector bundles of rank $r$ and degree $d$ on $X$, with the induced real structure. Using a gauge-theoretic approach, we determine the number of connected components of the real locus of $\mathcal{N}(r,d)$ for general $r$ and $d$. We show in particular that, when the base curve has real points, quaternionic vector bundles can exist for even rank and degree but that the number of connected components of $\R\mathcal{N}(r,d)$ is still equal to that of $\R\Pic_d$. In contrast, when the base curve has empty real locus and $r$ and $d$ are not coprime, the number of connected components of $\R\mathcal{N}(r,d)$ can be smaller than that of $\R\Pic_d$. We then generalize these results to real loci of moduli spaces of Higgs bundles and apply them to the study of the topology of certain $(A,A,A)$ and $(A,B,A)$ branes in the associated hyperkähler quotient.
\end{abstract}

\maketitle

\vspace{-1cm}

\tableofcontents

\vspace{-1cm}
\section{Introduction}

Let $X$ be a compact and connected Riemann surface of genus $g \geq 2$. We denote by $\mathcal{N}(r,d)$ the moduli space of (semistable) vector bundles of rank $r$ and degree $d$ over $X$. It is an irreducible projective variety of dimension $r^2(g-1)+1$, which is smooth whenever $r$ and $d$ are coprime. In general, the smooth points of $\mathcal{N}(r,d)$ are exactly the stable vector bundles (every vector bundle is stable if $\gcd(r,d)=1$). The  study of these varieties has attracted significant interest in algebraic and differential geometry since the 1970s, due to its connection with gauge theory, Morse theory, character varieties and more. We are interested in its study under the additional presence of a \textit{real structure} on $X$, i.e.\ an antiholomorphic involution $\sigma:X \to X$. In this case, the variety $\mathcal{N}(r,d)$ has an induced antiholomorphic involution $\sigma:\mathcal{N}(r,d) \to \mathcal{N}(r,d)$ given by $\sigma(E)=\sigma^*(\overline{E})$. The corresponding real points $\R \mathcal{N}(r,d)=\mathcal{N}(r,d)^{\sigma}$ form a real algebraic variety. The study of this real algebraic variety is closely related to the study of real and quaternionic vector bundles over $(X,\sigma)$. This setting was first studied in \cite{BHH}. Recall that a real/quaternionic vector bundle on $(X,\sigma)$ is a pair $(E,\tau)$, where $E$ is an holomorphic vector bundle on $X$ and $\tau:E \to E$ an antiholomorphic map covering $\sigma$ such that $\tau^2=Id$ (real) or $\tau^2=-Id$ (quaternionic). If $E \in \R \mathcal{N}^{st}(r,d)$ is a stable vector bundle, it can be endowed with an essentially unique either real or quaternionic structure and not both (see Proposition \ref{realHiggsbundlemoduli}). The open subset $\R \mathcal{N}^{st}(r,d)$ can thus be thought of as the moduli space of geometrically stable real and quaternionic stable vector bundles on $(\Sigma,\sigma)$. In particular, if $r,d$ are coprime, we have a modular description of $\R \mathcal{N}(r,d)$. For non-coprime $r$ and $d$, the strictly semistable points in $\R \mathcal{N}(r,d)$ do not admit such a modular description. We refer to Proposition \ref{polystable-real} for a more detailed analysis of what happens in this case. For $r=1$ (i.e.\ in the case of the Picard variety $\Pic_d$ of line bundles of degree $d$), the geometry of $\R \Pic_d$ has been known at least since the work of Gross and Harris in \cite{GH}. For $r \geq 2$, most of the information in the literature is concerned with the case when $r$ and $d$ are coprime, in which we have the modular interpretation of real points of $\mathcal{N}(r,d)$ mentioned above. The connected components of the geometrically stable stable locus of $\mathbb{R}\mathcal{N}(r,d)$, for instance, have been determined in \cite{Sch_JSG}, using that modular interpretation. The main result of that paper is that these connected components are indexed by the topological types of real and quaternionic vector bundles of rank $r$ and degree $d$ (see Section \ref{gauge_theory_moduli} for details). In the present paper, we determine the connected components of $\R \mathcal{N}(r,d)$ for all $r \geq 1$ and all $d \in \Z$ (Theorem \ref{main-result-intro}). Then, using the results of our previous paper \cite{Sch-Sco}, we obtain a description of the connected components of the real points of the moduli spaces of Higgs bundles (Theorem \ref{main-result-Higgs-intro}). Note that, for all $r$ and $d$, we can compose the determinant map $\det:\R \mathcal{N}(r,d) \to \R \Pic_d$ with the canonical map $\R\Pic_d \to \pi_0(\R \Pic_d)$ to obtain a continuous map $c_{r,d}:\R \mathcal{N}(r,d) \to \pi_0(\R \Pic_d)$. For every $s \in \pi_0(\R \Pic_d)$, we denote by $\R \mathcal{N}(r,d,s) \coloneqq c_{r,d}^{-1}(s)$ the fibre of that map above $s$. The main result of this article is the following (see Theorem \ref{mainresult}).

\begin{teorema}
\label{main-result-intro}
The fibres of the map $c_{r,d}$ are connected and the decomposition of $\R \mathcal{N}(r,d)$ into connected components is given as follows:
$$
\R \mathcal{N}(r,d)= \bigsqcup_{s \in \Imm c_{r,d}}\R \mathcal{N}(r,d,s)\ .
$$
\end{teorema}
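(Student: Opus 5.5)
The plan is to show two things: each fibre $\R\mathcal{N}(r,d,s)$ is connected, and these fibres are open and closed. Openness and closedness are immediate. The map $c_{r,d}$ is continuous and $\pi_0(\R\Pic_d)$ is a finite discrete set, so each non-empty fibre is a union of connected components. The whole content of Theorem \ref{main-result-intro} is therefore the connectedness of each non-empty fibre.

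To prove that, I will use the gauge-theoretic description. Fix a topological real or quaternionic type $(E,\tau)$ of rank $r$ and degree $d$, and the corresponding space $\mathcal{C}^\tau_{ss}$ of $\tau$-compatible semistable holomorphic structures. This space is the fixed locus of the induced involution on the semistable stratum of the space of holomorphic structures. The first step is to show that $\mathcal{C}^\tau_{ss}$ is connected. I will argue via the real version of the Atiyah--Bott stratification. The affine space $\mathcal{C}^\tau$ is contractible, and every unstable stratum is a real locus of a complex stratum. The real codimension of the unstable real strata is half the complex codimension computed by Atiyah--Bott, namely $\geq (g-1)\geq 1$. I expect it to be at least $2$ after checking the formula, using $g\geq 2$. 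Removing closed strata of codimension $\geq 2$ from a connected manifold keeps it connected. This is the step I expect to be the main obstacle. The strata are only locally closed, and for codimension-one phenomena the real codimension could drop. If the count gives exactly $1$ for some Harder--Narasimhan types, I will instead use the $\tau$-equivariant version of the Yang--Mills flow / Morse theory as in \cite{Sch_JSG}. There I would check that the normal bundle of each real stratum has rank $\geq 2$ by computing $\dim_\R H^1(X,\Hom(E_j,E_i))^\tau = \dim_\C H^1(X,\Hom(E_j,E_i))$ for $i<j$, which is $\geq 2$ by Riemann--Roch when $g\geq2$.

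Given connectedness of $\mathcal{C}^\tau_{ss}$, its image in $\R\mathcal{N}(r,d)$ under the quotient map is connected. By Proposition \ref{realHiggsbundlemoduli} and Proposition \ref{polystable-real}, every real point is represented by a polystable bundle admitting a real or quaternionic structure of some topological type. Hence $\R\mathcal{N}(r,d)$ is the union of the images $\mathcal{N}^\tau$ over the finitely many types $\tau$. Note that for strictly polystable points the structure may be of mixed type, with summands $F\oplus\sigma^*\bar F$, but it is still realised by some $\tau$ on the underlying bundle.

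It remains to see that all types $\tau$ with the same determinant component $s$ give subsets $\mathcal{N}^\tau$ that lie in one connected piece. Here I will compute $c_{r,d}(\mathcal{N}^\tau)$ via the determinant of $(E,\tau)$, which is a real or quaternionic line bundle whose class in $\pi_0(\R\Pic_d)$ is determined by the topological invariants of $\tau$ (Stiefel--Whitney classes on the real circles). When two types $\tau,\tau'$ map to the same $s$ — for instance, a real and a quaternionic type in even rank and degree, or different real types over a curve without real points — I will exhibit a common polystable point. This point is a bundle $L_1\oplus\dots\oplus L_r$, or a sum of pairs $F\oplus\sigma^*\bar F$, carrying both a $\tau$- and a $\tau'$-structure. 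Its existence shows that $\mathcal{N}^\tau\cap\mathcal{N}^{\tau'}\neq\emptyset$. Combining the connectedness of each $\mathcal{N}^\tau$ with these intersections gives the connectedness of each $\R\mathcal{N}(r,d,s)$. The case analysis on whether $X^\sigma$ is empty and on the parity of $r,d$ is where the explicit constructions are needed.
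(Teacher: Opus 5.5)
Your treatment of a single topological type matches the paper: you prove connectedness of the $\tau$-compatible semistable locus via real Harder--Narasimhan strata, and you link a real and a quaternionic type through $\mathcal{F}\oplus\sigma^*\ov{\mathcal{F}}$. One small correction: the real codimension of $\mathcal{A}^\tau_\mu$ in $\mathcal{A}^\tau$ equals the complex Atiyah--Bott codimension $d_\mu\geq g\geq 2$, not half of it.

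The genuine gap is the covering step. It is false that every polystable point of $\R\mathcal{N}(r,d)$ carries a real or quaternionic structure on the whole bundle. Proposition \ref{polystable-real} only gives $E\simeq E_1\oplus\cdots\oplus E_\ell$ with each stable summand real \emph{or} quaternionic, and a mixed sum can admit neither. Take $\mathcal{E}_1$ geometrically stable real and $\mathcal{E}_2$ geometrically stable quaternionic of the same slope. Since $\Hom(\mathcal{E}_1,\mathcal{E}_2)=\Hom(\mathcal{E}_2,\mathcal{E}_1)=0$ and $\sigma^*\ov{\mathcal{E}_i}\simeq\mathcal{E}_i$, every antiholomorphic lift $\tau$ of $\sigma$ to $\mathcal{E}_1\oplus\mathcal{E}_2$ is diagonal. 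Then $\tau^2=\pm\mathrm{Id}$ would put structures of the same kind on both summands, contradicting Proposition \ref{realHiggsbundlemoduli}. The paper explicitly flags such points.

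They are not a negligible set. Suppose $\R X=\emptyset$ and $g$ is odd, and take $L_1\oplus L_2$ with $L_1\in\Pic_0^{\R}$ and $L_2\in\Pic_0^{\H}$. Its determinant is quaternionic, so it lies in $c_{2,0}^{-1}(\H)$. But every real or quaternionic bundle of rank $2$ has real determinant (Remark \ref{remark-determinants}). So this entire non-empty fibre lies outside $\bigcup_\tau\mathcal{N}^\tau$, and your argument says nothing about its connectedness.

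What is missing is a mechanism that reaches these mixed points and attaches them to the rest of the fibre. The paper does this by induction on $m=\gcd(r,d)$, writing $r=mr'$ and $d=md'$. It covers $\R\mathcal{N}(r,d,s)$ by two kinds of sets:
\begin{enumerate}
\item the images of the Lagrangian quotients of the types landing in $s$;
\item the images of the direct-sum maps $\R\mathcal{N}(m_1r',m_1d',s_1)\times\R\mathcal{N}(m_2r',m_2d',s_2)\to\R\mathcal{N}(r,d,s)$ with $m_1+m_2=m$, obtained by splitting a mixed point into a quaternionic-type part and a real-type part.
\end{enumerate}
The images in the second family are connected by the inductive hypothesis. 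The paper then exhibits explicit polystable points, such as $\mathcal{G}^{\oplus(m-1)}\oplus\mathcal{E}$, lying in all of these images and in a Lagrangian-quotient image, which chains everything together. This needs parity adjustments, e.g.\ only even $m_i$ occur when $\R\mathcal{N}(r',d')=\emptyset$.

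Your argument is exactly the base case $m=1$, where every real point is geometrically stable and the covering claim does hold. A non-inductive repair using products of a real-type and a quaternionic-type Lagrangian quotient is conceivable. You would still have to construct linking points between those product images, and with the pure types, inside each fibre.
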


As a corollary to Theorem \ref{main-result-intro}, we count the connected components of the real locus $\R\mathcal{N}(r,d)$ for all $r\geq 1$ and all $d\in\Z$ (see Table \ref{CC_count_curves_without_real_pts} and Theorem \ref{CC_count_curves_with_real_pts}). We will see that, when $\gcd(r,d) = 1$, the indexing set $\Imm\, c_{r,d}$ coincides with the set of topological types of real and quaternionic bundles of rank $r$ and degree $d$, in which case $\Imm\, c_{r,d} = \pi_0(\R\Pic_d)$. But, when $\gcd(r,d) \neq 1$ and $\R X = \emptyset$, the set $\Imm\, c_{r,d}$ might be strictly smaller than $\pi_0(\R \Pic_d)$. Namely, this happens when:
\begin{enumerate}
    \item $g$ is even, $d$ is odd and $r$ is even, in which case $\R\mathcal{N}(2r',2d'+1) = \emptyset$ even though $\R\Pic_{2d'+1} = \Pic_{2d'+1}^\H$ has one connected component.
    \item $g$ is odd, $d$ is even and $r$ is even, in which case the only non-empty fibre of $c_{r,d}$ is $\R\mathcal{N}(2r',2d') = c_{2r',2d'}^{-1}(\R)$ even though $\R\Pic_{2d'} = \Pic_{2d'}^\R \sqcup \Pic_{2d'}^\H$ has two connected components and $c_{2r',2d'}^{-1}(\R)$ contains quaternionic vector bundles.
\end{enumerate}
When $\R X \neq \emptyset$, the real locus $\R\mathcal{N}(r,d)$ always has $2^{n-1}$ connected components (Theorem \ref{CC_count_curves_with_real_pts}) and we show that, when $\gcd(r,d) \neq 1$, quaternionic vector bundles (which then necessarily have even rank and degree) are contained in the fibre $c_{r,d}^{-1}(0)$, meaning that the real part of the determinant bundle of a quaternionic vector bundle necessarily has trivial Stiefel-Whitney classes (see Lemma \ref{SW_class_for_quat_bundles}). The strategy to prove Theorem \ref{main-result-intro} consists in rewriting $\R \mathcal{N}(r,d,s)$ as a union of smaller subspaces and showing that these are connected and have a non-empty intersection. In this context, a fundamental step is to prove the connectedness of the set of $\R \mathcal{N}(r,d,s)$ given by the bundles that admit a real or quaternionic structure with a fixed topological type. This is indeed a relevant subspace to look at because the intersection $\R\mathcal{N}(r,d,s) \cap \mathcal{N}^{st}(r,d)$ consists entirely of such bundles. Note, however, that the strictly semistable locus of $\R\mathcal{N}(r,d,s)$ may, in addition to real and quaternionic bundles of fixed topological type, also contain bundles that admit both a real and a quaternionic structure, as well as bundles that admit neither a real nor a quaternionic structure. The study of the connectedness of the loci of $\R\mathcal{N}(r,d,s)$ consisting of real and quaternionic bundles of fixed topological type is achieved by gauge-theoretic methods. Namely, using gauge theory, we construct connected (non-empty) topological spaces whose points are in bijection with real or quaternionic semistable bundles on $(X,\sigma)$ of a given topological type. These spaces are mapped continuously to $\R \mathcal{N}(r,d)$, so the images are connected. This gauge-theoretic construction has already been used in \cite{Sch_JSG}, but note that these topological spaces are not moduli spaces in the strict sense of algebraic geometry: the points have the expected modular interpretation (as $S$-equivalence classes of semistable real and quaternionic vector bundles) but the algebraic structure of these moduli spaces is unclear. Since we are in the context of real algebraic geometry, a first natural question to ask is whether they carry a semi-algebraic structure. To state our next result, recall that a Higgs bundle over $X$ is a pair $(E,\phi)$, where $E$ is an holomorphic vector bundle over $X$ and $\phi:E \to E \otimes \Omega^1_X$, where $\Omega^1_X$ is the cotangent bundle. For all $r$ and $d$, let $\MD(r,d)$ be the moduli space of semistable Higgs bundles of rank $r$ and degree $d$ over $X$. This is an irreducible quasi-projective complex algebraic variety of dimension $2(r^2(g-1)+1)$. Viewing $\mathcal{N}(r,d)$ as the subset of $\MD(r,d)$ cut out by the equation $\varphi=0$, the map $c_{r,d}:\R \mathcal{N}(r,d) \to \pi_0(\R \Pic_d)$ can be extended to a map $\widehat{c_{r,d}}:\R \MD(r,d) \to \pi_0(\R \Pic_d)$ and we set $\R \MD(r,d,s) := \widehat{c_{r,d}}^{-1}(s)$. We then prove the following result (Theorem \ref{theoremHiggsbundles}).

\begin{teorema}\label{main-result-Higgs-intro}
For all $s \in \pi_0(\R \Pic_d)$, the subspace $\R \MD(r,d,s)$ is the connected component of $\R\MD(r,d)$ containing $\R\mathcal{N}(r,d,s)$.
\end{teorema}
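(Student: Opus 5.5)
The plan is to combine Theorem \ref{main-result-intro} with a retraction argument coming from the $\C^*$-action on $\MD(r,d)$, the standard Hitchin-type argument adapted to real points. Since $\pi_0(\R\Pic_d)$ is discrete and $\widehat{c_{r,d}}$ is continuous, each $\R\MD(r,d,s)$ is open and closed in $\R\MD(r,d)$. It therefore suffices to show that $\R\MD(r,d,s)$ is connected and that it contains $\R\mathcal{N}(r,d,s)$. The inclusion is immediate, because $\widehat{c_{r,d}}$ restricts to $c_{r,d}$ on the locus $\varphi=0$. By Theorem \ref{main-result-intro}, $\R\mathcal{N}(r,d,s)$ is connected whenever it is non-empty.

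For connectedness, I will use the real part of the $\C^*$-action. The real structure on $\MD(r,d)$ is $(E,\varphi)\mapsto(\sigma^*\overline{E},\sigma^*\overline{\varphi})$, so it commutes with the action of $t\in\R_{>0}\subset\C^*$, since $t$ is real. The flow $(E,t\varphi)$ for $t\in(0,1]$ therefore preserves $\R\MD(r,d)$. It also preserves $\det E$, hence each fibre $\R\MD(r,d,s)$. Properness of the Hitchin map implies that $\lim_{t\to0}(E,t\varphi)$ exists in $\MD(r,d)$, and this limit is real because the real locus is closed. Every point of $\R\MD(r,d,s)$ is thus joined by a path inside $\R\MD(r,d,s)$ to a real $\C^*$-fixed point. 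It then remains to show that each real fixed point lies in the same connected component of $\R\MD(r,d,s)$ as $\R\mathcal{N}(r,d,s)$.

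This last step is the one I expect to be the main obstacle. The real fixed points with $\varphi\neq0$ are variations of Hodge structure, so they do not automatically lie in $\mathcal{N}(r,d)$. I would handle them using the results of our previous paper \cite{Sch-Sco}, which presumably describe the real Białynicki-Birula-type stratification. Concretely, for each real fixed point I will try to find a real point in its downward flow set, that is, a real Higgs bundle whose $t\to\infty$ limit is that fixed point, which can then be connected to a point of $\R\mathcal{N}(r,d,s)$. An alternative route is to use a real version of the Morse function $\|\varphi\|^2$ and show that only the minima $\varphi=0$ contribute to $\pi_0$. This requires checking that the real downward Morse indices at non-minimal real critical points are positive, which should follow from the complex index being positive and half of the tangent directions being real.

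With this in place, $\R\MD(r,d,s)$ deformation-retracts onto a subset whose components all meet the connected set $\R\mathcal{N}(r,d,s)$, so it is connected. If $\R\mathcal{N}(r,d,s)=\emptyset$, the same argument shows that $\R\MD(r,d,s)=\emptyset$, and the statement holds trivially.
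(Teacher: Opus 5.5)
Your proposal is correct and follows essentially the same route as the paper. In both, each real point is flowed by $t\to 0$ in $\R_{>0}$ to a real fixed point, and real fixed points with $\varphi\neq 0$ are joined to $\R\mathcal{N}(r,d,s)$ by chains of real Bialynicki-Birula flows inside the open-closed fibre $\R\MD(r,d,s)$, which is then connected by Theorem \ref{main-result-intro}. The paper, like you, defers the existence of real downward flows at real fixed points (including singular, strictly polystable ones) to the earlier Schaffhauser--Scognamiglio paper, and it deduces $\Imm\,\widehat{c_{r,d}}=\Imm\,c_{r,d}$ exactly as you handle the case $\R\mathcal{N}(r,d,s)=\emptyset$.
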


Since $\Imm\, \widehat{c_{r,d}} = \Imm\, c_{r,d}$, we deduce in Corollary \ref{CC_branes} the number of connected components of $\R\MD(r,d)$. We refer to Section \ref{sectionà-Higgs} for the proof of Theorem \ref{main-result-Higgs-intro}. In the same paragraph, we also explain the consequences of Theorem \ref{main-result-Higgs-intro} on the topology of certain $(A,A,B)$ and $(A,B,A)$ branes in the moduli space of solutions to Hitchin's equations. More precisely, the real structure $\sigma : X \to X$ induces two conjugate real structures $\beta : (E,\phi) \mapsto (\sigma^*(\ov{E}), \sigma^*(\ov{\varphi}))$ and $\widetilde{\beta} : (E,\phi) \mapsto (\sigma^*(\ov{E}), -\sigma^*(\ov{\varphi}))$ which, on the Betti side of the nonabelian Hodge correspondence, correspond respectively to the real structure of the character variety $\MB(r,0) := \mathrm{Hom}(\pi_1 X, \Gl(r,\C)) /\negmedspace/ \Gl(r,\C)$ induced by $\sigma_* : \pi_1 X \to \pi_1 X$ and $g \mapsto \ov{g}$ on $\Gl(r,\C)$ (hence an $(A,A,B)$-brane $\mathrm{Fix}(\beta)$) and the algebraic involution of $\MB(r,0)$ induced by $\sigma_*$ and $g \mapsto \ ^t g^{-1}$ (hence an $(A,B,A)$-brane $\mathrm{Fix}(\widetilde{\beta})$). We then prove the following result in Theorem \ref{CC_branes2}.

\begin{teorema}
    The $(A,A,B)$ and $(A,B,A)$ branes $\mathrm{Fix}(\beta)$ and $\mathrm{Fix}(\widetilde{\beta})$ are homeomorphic. If $\R X = \emptyset$, the number of connected components of these fixed-point sets is given by Table \ref{CC_count_curves_without_real_pts}. If $\R X$ has $n > 0$ connected components, $\mathrm{Fix}(\beta)$ and $\mathrm{Fix}(\widetilde{\beta})$ have $2^{n-1}$ connected components.
\end{teorema}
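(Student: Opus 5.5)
The plan has three steps: identify the two fixed-point sets with each other, then with the real loci already studied, and then count components using Theorem \ref{main-result-Higgs-intro} together with the component counts for $\R\mathcal{N}(r,d)$.

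\textbf{Step 1: $\mathrm{Fix}(\beta)$ and $\mathrm{Fix}(\widetilde{\beta})$ are homeomorphic.} We use the $\C^*$-action on $\MD(r,d)$, $\lambda\cdot(E,\phi)=(E,\lambda\phi)$, and restrict to $\lambda = i$. A direct computation gives
$$
\beta(E,i\phi)=(\sigma^*\ov{E},-i\,\sigma^*\ov{\phi}) = i\cdot\widetilde{\beta}(E,\phi).
$$
So multiplication by $i$, which is a homeomorphism (indeed an algebraic automorphism) of $\MD(r,d)$, intertwines $\widetilde{\beta}$ with $\beta$. Hence it maps $\mathrm{Fix}(\widetilde{\beta})$ bijectively and continuously onto $\mathrm{Fix}(\beta)$, with continuous inverse given by multiplication by $-i$. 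Transporting through the nonabelian Hodge homeomorphism, one also gets that the corresponding branes in the Hitchin moduli space and in $\MB$ are homeomorphic. For this we must check that the correspondence intertwines $\beta$ and $\widetilde{\beta}$ with the stated involutions of $\MB(r,0)$. This compatibility is the standard one and is presumably established (or quoted from \cite{Sch-Sco}) earlier in the Higgs section, so we may invoke it.

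\textbf{Step 2: $\mathrm{Fix}(\beta)$ is the real locus of the Higgs moduli space.} By definition $\beta$ is exactly the real structure on $\MD(r,d)$ induced by $\sigma$, so $\mathrm{Fix}(\beta)=\R\MD(r,d)$. It therefore suffices to count the connected components of $\R\MD(r,d)$.

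\textbf{Step 3: counting components.} By Theorem \ref{main-result-Higgs-intro}, the sets $\R\MD(r,d,s)$ for $s\in\Imm\,\widehat{c_{r,d}}$ are exactly the connected components. Two points need to be justified here.
\begin{enumerate}
\item Each $\R\MD(r,d,s)$ is open and closed, since it is a fibre of the continuous map $\widehat{c_{r,d}}$ to a discrete set.
\item $\Imm\,\widehat{c_{r,d}}=\Imm\,c_{r,d}$. One inclusion is clear because $\R\mathcal{N}(r,d)\subset\R\MD(r,d)$. For the other, a real point $(E,\phi)$ flows under $t\cdot\phi$ with $t\in\R_{>0}$, $t\to 0$, to a real $\C^*$-fixed point. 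The determinant of that limit lies in the same component of $\R\Pic_d$, because $\det E$ is constant along the flow. Its image is therefore realized by a real point of the nilpotent-cone/fixed-locus part that already lies in the same fibre as some point of $\R\mathcal{N}(r,d,s)$. This is precisely the content of Theorem \ref{main-result-Higgs-intro}: every fibre contains $\R\mathcal{N}(r,d,s)$ and is connected, hence nonempty only when $\R\mathcal{N}(r,d,s)$ is.
\end{enumerate}
The number of components thus equals $|\Imm\,c_{r,d}|$. When $\R X=\emptyset$ this is read off Table \ref{CC_count_curves_without_real_pts}. When $\R X$ has $n>0$ components, Theorem \ref{CC_count_curves_with_real_pts} gives $2^{n-1}$.

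\textbf{Main obstacle.} The delicate point is Step 1 on the Betti/Hitchin side: checking that $\beta$ and $\widetilde{\beta}$ correspond to the stated involutions of the character variety. We also need that the homeomorphism given by multiplication by $i$ is compatible with the hyperkähler structure, so that the statement is about the branes in the hyperkähler quotient and not just about subsets of $\MD$. I would handle this by working with solutions to Hitchin's equations $(A,\Phi)$. Rotating $\Phi\mapsto i\Phi$ preserves the equations. Under the flat connection $A+\Phi+\Phi^*$, the involution $\beta$ corresponds to $\rho\mapsto\ov{\rho}\circ\sigma_*$, while $\widetilde{\beta}$ corresponds to composing with the Cartan involution $g\mapsto {}^t\ov{g}^{-1}$, which gives $\rho\mapsto {}^t\rho^{-1}\circ\sigma_*$.
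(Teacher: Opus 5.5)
Your proposal is correct and follows the paper's route: the paper's proof is exactly the observation that $(E,\phi)\mapsto(E,i\phi)$ conjugates the two real structures on $\MD(r,0)$, followed by an appeal to Corollary \ref{CC_branes} for the component count. The one weak spot is item (2) of Step 3. There, the inference ``each fibre is connected and contains $\R\mathcal{N}(r,d,s)$, hence it is nonempty only when $\R\mathcal{N}(r,d,s)$ is'' does not follow on its own. The equality $\Imm\,\widehat{c_{r,d}}=\Imm\,c_{r,d}$ comes from the real Bialynicki-Birula flow argument in the proof of Theorem \ref{theoremHiggsbundles}, so you should cite that directly, as Corollary \ref{CC_branes} does.
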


\section{Moduli of semistable bundles and Galois structure}
\label{section-first}
\subsection{Moduli spaces of vector bundles on curves}
\label{notations}

Let $X$ be a compact and connected Riemann surface of genus $g \geq 2$. Recall that the slope $\mu(E)$ of a vector bundle $E$ on $X$ is defined as $\mu(E) \coloneqq \frac{\de(E)}{\rk(E)}$. We say that a vector bundle $E$ is semistable if, for every non-trivial  subbundle $F \subseteq E$, we have $\mu(F) \leq \mu(E)$. The vector bundle $E$ is said to be stable if the previous inequality is strict for every $F$. If $\gcd(r,d) = 1$, a vector bundle $E$ is semistable  if and only if it is stable. For a stable vector bundle $E$, there is an equality $\End(E)=\{\lambda Id\}_{\lambda \in \C}$. For every semistable  bundle $E$, there exists a filtration $0=E_{\ell} \subseteq E_{\ell-1} \subseteq \cdots \subseteq E_0=E$  such that, for each $j=1,\dots,l$, we have $\mu\left(E_{j-1}/E_j\right)=\mu(E)$ and $E_{j-1}/E_j$ is stable. We denote by $\gr(E)$ the associated graded object $\gr(E)\coloneqq E_{\ell} \oplus \cdots \oplus E/E_1.$ Its graded isomorphism class depends only on the semistable vector bundle $E$, not on the choice of the filtration satisfying the previous conditions \cite{Seshadri67}. The vector bundle $E$ is called polystable if it is semistable and $E \simeq \gr(E)$. Moreover, two semistable vector bundles $E$ and $E'$ are called $S$-equivalent if $\gr(E) \cong \gr(E')$. For all $r,d$, there exists a normal quasiprojective complex variety $\mathcal{N}(r,d)$  of dimension $r^2(g-1) + 1$ parameterizing $S$-equivalence classes of semistable vector bundles of rank $r$ and degree $d$ over $X$.  If $r,d$ are coprime, the moduli space $\mathcal{N}(r,d)$ is smooth, see for instance \cite{NS}. More generally, for all $r$ and $d$, there is an open smooth subvariety $\mathcal{N}^{st}(r,d) \subseteq \mathcal{N}(r,d)$, whose points are isomorphism classes of stable vector bundles of rank $r$ and degree $d$. Given $r,d$ as above, in what follows we set $m \coloneqq \gcd(r,d)$ and we put $r' \coloneqq r/m$ and $d' \coloneqq d/m$. Notice that $d'/r'=d/r$ and $\gcd(r',d')=1$. In particular, consider a polystable vector bundle $E \in \mathcal{N}(r,d)$ with $E \cong E_1 \oplus \cdots \oplus E_s$ where $E_1,\dots,E_s$ are stable vector bundles such that $\mu(E_i)=d/r$ for all $i$. Then there exist integers $m_1,\dots,m_s > 0$ such that $E_i \in \mathcal{N}^{st}(m_ir',m_id')$ for each $i=1,\dots,s$ (namely, $m_i := r' d_i = r_i d'$, where $d_i := \de(E_i)$ and $r_i := \rk(E_i)$).

\subsection{Real and quaternionic vector bundles}

Let $\sigma:X \to X$ be an antiholomorphic involution of $X$, also called a real structure. The involution $\sigma$ determines a real projective curve $X_{\R}$, i.e.\ a smooth projective variety of dimension $1$ over $\Spec(\R)$ such that $ X_{\R} \times_{\Spec(\R)} \Spec(\C) \simeq X$ and, via this isomorphism, $\sigma$ corresponds to the complex conjugation. The isomorphism above induces an identification $X_{\R}(\R)=X^{\sigma}$. We will denote by $n \geq 0$ the number of connected components of the fixed point set $X^{\sigma}$. Recall that each connected component of $X^{\sigma}$ is homeomorphic to a circle. In what follows, we will denote by $\Gamma\coloneqq \Gal(\C/\R)=\{1,\sigma\}$ the absolute Galois group of $\R$. We will also use the notation $\R X \coloneqq X^{\sigma}$. Given a holomorphic vector bundle $E \to X$, we will say that an antiholomorphic map $\tau: E \to E$ covers $\sigma$ if $\tau$ is fibrewise $\C$-antilinear and makes the following diagram commute.
$$
\begin{tikzcd}
E \ar[r,"\tau"'] \ar[d] &E \ar[d] \\
X \ar[r,"\sigma"'] &X
\end{tikzcd}
$$
Note that giving an antiholomorphic map $\tau:E\to E$ covering $\sigma$ is equivalent to giving a morphism of holomorphic vector bundles $\alpha:E \to \sigma^*(\overline{E})$. Moreover, $\tau^2 = \pm \mathrm{Id}_E$ if and only if $\sigma^*(\overline{\alpha}) \alpha = \pm \mathrm{Id}_E$. A real vector bundle is a pair $(E,\tau)$, where $E$ is a vector bundle and $\tau:E \to E$ is an antiholomorphic involution covering $\sigma:X \to X$.
Quaternionic vector bundles are defined similarly, except that $\tau^2 = - \mathrm{Id}_E$. A morphism of real/quaternionic vector bundles $f : (E_1,\tau_1) \to (E_2,\tau_2)$ is a morphism of vector bundles $E_1 \to E_2$ that commutes with $\tau_1$ and $\tau_2$. 

\begin{remark}
\label{remark-determinants}
If $(E,\tau)$ is a real vector bundle, the determinant line bundle $(\det(E),\det(\tau))$ is a real vector bundle. If $(E,\tau)$ is a quaternionic vector bundle, the determinant line bundle $(\det(E),\det(\tau))$ is real if $\rk(E)$ is even and quaternionic if $\rk(E)$ is odd. 
\end{remark}

If $(E,\tau)$ is a real vector bundle, the involution $\tau$ endows $E$ with a structure of real algebraic vector bundle over $X$, meaning that there exists an algebraic vector bundle $E_{\R}$ over $X_{\R}$ such that we have an isomorphism 
$E_{\R} \times_{\Spec(\R)} \Spec(\C) \cong E$
and, via this isomorphism, $\tau$ corresponds to the complex conjugation on the second factor. If $(E,\tau)$ is a real vector bundle whose underlying vector bundle $E$ is semistable (of rank $r$ and degree $d$, say), then the corresponding point $E \in \mathcal{N}(r,d)$ belongs to the $\Gamma$-fixed points. However, not all real points of $\R\mathcal{N}(r,d)$ come from real vector bundles. For instance, a quaternionic vector bundle whose underlying vector bundle is semistable also defines a real point.
\begin{remark}
If $\R X \neq \emptyset$ and $(E,\tau)$ is a quaternionic vector bundle over $X$, then $\rk(E)$ must be even. Indeed, taking a point $x \in \R X$, the map $\tau$ restricts to an antilinear map on the fiber $\tau_x :E_x \to E_x$ such that $\tau_x^2=-\mathrm{Id}$, i.e.\ to a quaternionic structure on the $\C$-vector space $E_x$, and having such a structure is only possible if the dimension of $E_x$ is even.
\end{remark}

More generally, we have the following criteria for the existence of real/quaternionic bundle on a Klein surface $(\Sigma,\sigma)$.

\begin{prop}{\cite[Theorem 2.4]{Sch_JSG}}
\label{propcriteriaexist}
We have the following conditions for the existence of real and quaternionic vector bundles:
\begin{enumerate}
    \item If $\R X= \emptyset$ then real vector bundles of rank $r$ and degree $d$ exist if and only if $d \equiv 0 \ \mathrm{mod}\ 2$. Quaternionic vector bundles exist if and only if $d+r(g-1) \equiv 0 \ \mathrm{mod}\ 2$.
    \item If $\R X\neq \emptyset$ then real vector bundles exist for any rank $r$ and degree $d$. Quaternionic vector bundles exist if and only if $r \equiv 0 \ \mathrm{mod}\ 2$ and $d\equiv 0 \ \mathrm{mod}\ 2$.
\end{enumerate}
\end{prop}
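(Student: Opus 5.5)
We prove both implications for each condition: a necessity argument using characteristic classes and an existence argument by explicit construction. We assume the standard topological classification of real and quaternionic bundles over a Klein surface (Biswas--Huisman--Hurtubise): real bundles are classified by $r$, $d$ and the first Stiefel--Whitney classes $w_1$ of the real parts over the $n$ circles of $\R X$; quaternionic bundles are classified by $r$ and $d$ alone. The existence of such bundles reduces to line bundles and direct sums, since any topological real or quaternionic $C^\infty$ bundle over a curve carries a compatible holomorphic structure. To get one, choose a $\tau$-invariant $\bar\partial$-operator by averaging $\bar\partial$ with $\tau\bar\partial\tau$.

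\emph{Case $\R X\neq\emptyset$.} For real bundles, take $L=\mathcal{O}(D)$ with $D$ a $\sigma$-invariant divisor of degree $d$. If $d$ is odd, $D$ must contain a real point; this is possible because $\R X\neq\emptyset$, and an even number of points can be taken in conjugate pairs. Then $L\oplus\mathcal{O}^{r-1}$ is real, of rank $r$ and degree $d$. For quaternionic bundles, the preceding remark already forces $r$ even. For the degree, use $\det$: when $r$ is even, $(\det E,\det\tau)$ is a real line bundle by Remark \ref{remark-determinants}. We then compare the real structures on $\det E$ restricted to a real circle $C$. Over $C$, $E|_C$ is a quaternionic vector bundle over a circle, so $E|_C$ is a bundle of $\H$-lines. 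Since $\pi_0(\Gl(r/2,\H))$ is trivial, it is trivial over $C$, and the induced real line bundle $\det$ has $w_1=0$ on each circle. The degree of a real line bundle satisfies $d\equiv \sum_C w_1(\R L|_C)\pmod 2$, which is the real-points count of a real divisor. Hence $d$ is even. For existence with $r,d$ even, take $Q=\mathcal{O}^2$ with the standard quaternionic structure $(u,v)\mapsto(-\bar v,\bar u)$, twisted by a real line bundle of degree $d/2$ (real $\otimes$ quaternionic is quaternionic). Summing $r/2$ such blocks gives rank $r$ and degree $d$ as required.

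\emph{Case $\R X=\emptyset$.} For real bundles, the quotient $X/\sigma$ is a closed non-orientable surface, and $\det E$ descends to a real line bundle on it. The degree equals the Chern class, which is the pullback to $X$ of $w_1^2$-type classes, hence even. Concretely, $c_1$ is the pullback of $H^2(X/\sigma;\Z)\cong \Z/2$ followed by the transfer, and the image of $H^2(X/\sigma)\to H^2(X)$ lies in $2\Z$. For existence, $\mathcal{O}(x+\sigma x)^{d/2}\oplus \mathcal{O}^{r-1}$ works. For quaternionic bundles, the key input is that a quaternionic line bundle exists precisely in degree $g-1$ mod $2$. Indeed, by Gross--Harris, $\R\Pic_{g-1}$ contains a quaternionic component; for example, a theta-characteristic-type argument shows that the canonical bundle $K$ has a natural real structure of degree $2g-2$, and if $L$ is quaternionic then $L^2$ is real, forcing $2\deg L$ to have the right parity. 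Tensoring a quaternionic line bundle with real line bundles (even degree) then shows that quaternionic line bundles exist exactly in degrees $\equiv g-1 \pmod 2$. Sums give rank $r$, degree $\equiv r(g-1)$ plus even, which proves existence. For necessity, $\det E$ is quaternionic if $r$ is odd, so $d\equiv g-1$; if $r$ is even, $\det E$ is real, so $d$ is even. Both cases give $d+r(g-1)\equiv0$.

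The main obstacle is the line bundle statement for $\R X=\emptyset$: that quaternionic line bundles occur exactly in degrees $\equiv g-1\pmod 2$. I would prove it by computing the obstruction class in $H^2(\Gamma;\C^*)$-valued Galois cohomology, or by invoking Gross--Harris directly, rather than the heuristic above.
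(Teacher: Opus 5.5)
The paper does not prove this proposition; it quotes it from \cite{Sch_JSG}, which relies on the topological classification of \cite{BHH}. That classification essentially contains the statement, so you should not ``assume'' it; fortunately your argument never actually uses it. Your strategy is workable: reduce to rank one via $\det$ (Remark~\ref{remark-determinants}) and build examples by direct sums. Your circle argument for quaternionic bundles when $\R X\neq\emptyset$ is exactly the paper's Lemma~\ref{SW_class_for_quat_bundles}. Two steps are wrong as written but easy to repair.

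(i) The block count is off. $L\otimes Q\cong L\oplus L$ already has degree $2\deg L=d$, so $r/2$ such blocks have degree $rd/2$. Take instead $(L\otimes Q)\oplus Q^{\oplus(r/2-1)}$. Alternatively, in both cases at once, take $\mathcal{F}\oplus\sigma^*\ov{\mathcal{F}}$ with the swap structure of Remark~\ref{remarkrealstability} (defined for any $\mathcal{F}$), where $\mathcal{F}$ has rank $r/2$ and degree $d/2$.

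(ii) The parity argument for real bundles with $\R X=\emptyset$ fails. $H^2(X/\sigma;\Z)\cong\Z/2$ maps to $0$ in the torsion-free group $H^2(X;\Z)$, so if $c_1(\det E)$ were pulled back from it you would get $d=0$, which $\mathcal{O}(x+\sigma x)$ contradicts; also, the transfer goes in the other direction. What actually descends is the rank-$2$ real bundle $\det E/\det\tau$. Its orientation character is $w_1(X/\sigma)$, because $\det\tau$ reverses the complex orientation of the fibres. Its Euler class therefore lives in the twisted group $H^2(X/\sigma;\Z_{w_1})\cong\Z$, on which pullback is multiplication by $2$. A simpler route: for a nonzero meromorphic section $s$, one of $s+\tau s\sigma$ and $i(s-\tau s\sigma)$ is a nonzero $\tau$-invariant section. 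Its divisor is $\sigma$-invariant, hence of even degree since $\sigma$ is free.

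The genuine gap is the rank-one quaternionic parity for $\R X=\emptyset$, which is the whole content of the condition $d+r(g-1)\equiv 0$. Your theta-characteristic heuristic proves nothing: $2\deg L$ is always even, and $K$ plays no role.

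\emph{Necessity} has a short proof in every rank. The map $s\mapsto\tau\circ s\circ\sigma$ is antilinear with square $-1$ on $H^0(X,E)$. The same holds on $H^1(X,E)\cong H^0(X,K\otimes E^*)^*$, because $K\otimes E^*$ is again quaternionic. So both dimensions are even, and Riemann--Roch gives $\chi(E)=d+r(1-g)\equiv 0 \pmod 2$. This yields the quaternionic condition in (1), and together with $r$ even it yields the condition in (2), with no Stiefel--Whitney classes needed.

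\emph{Existence} is the one non-formal input. For $\R X=\emptyset$ and $r$ odd you need at least one odd-rank quaternionic bundle, equivalently (taking $\det$) a quaternionic line bundle, necessarily of degree $\equiv g-1$. This must be imported. One source is the Gross--Harris description of $\R\Pic_d$ recalled in Section~\ref{picardwithoutrealpoints}. Another is to pull back the quaternionic bundle $\mathcal{O}(-1)$ along a real morphism to $\mathbb{P}^1$ with its antipodal structure, which exists by Witt's theorem. With that input stated explicitly in place of the heuristic, and (i) and (ii) fixed, the argument is complete.
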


We also have the following result on the structure of the set of real points of the stable locus of $\mathcal{N}(r,d)$, which we denote by $\R\mathcal{N}^{st}(r,d)$. We refer to \cite[Theorem 3.8]{Sch_JSG} for further details on the structure of $\R\mathcal{N}^{st}(r,d)$, which will be called the \textit{geometrically stable} locus of $\R\mathcal{N}(r,d)$.

\begin{prop}{\cite[Proposition 2.8]{Sch_JSG}}
\label{realHiggsbundlemoduli}
Let $E$ be a stable vector bundle such that $\sigma^*(\ov{E}) \simeq E$, Then $E$ admits either a real or a quaternionic structure, but not both. Moreover, such a structure is unique up to isomorphism of real or quaternionic vector bundle. As a consequence, there is a decomposition of the geometrically stable locus of $\R\mathcal{N}(r,d)$ into a real and a quaternionic part:
\begin{equation}\label{decomp_real_quat_of_geom_stable_locus}
\R\mathcal{N}^{st}(r,d) =: \mathcal{N}^{st,\R}(r,d) \sqcup \mathcal{N}^{st,\H}(r,d) 
\end{equation}
\end{prop}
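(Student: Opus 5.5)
Let $\alpha : E \xrightarrow{\ \sim\ } \sigma^*(\ov{E})$ be an isomorphism of holomorphic vector bundles, which exists by assumption. The plan is to use stability, in the form $\End(E)=\C\,\mathrm{Id}$, to normalise $\alpha$ into a real or quaternionic structure. Consider $\sigma^*(\ov{\alpha})\circ\alpha : E \to \sigma^*\ov{\sigma^*\ov{E}} = E$. It is an automorphism of the stable bundle $E$, so it equals $\lambda\,\mathrm{Id}_E$ for some $\lambda\in\C^*$.

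\textbf{Reality of $\lambda$.} Equivalently, the antiholomorphic map $\tau$ covering $\sigma$ associated with $\alpha$ satisfies $\tau^2=\lambda\,\mathrm{Id}$. Computing $\tau^3$ in two ways gives
\[
\tau\circ\tau^2=\tau(\lambda\,\cdot)=\ov{\lambda}\,\tau, \qquad \tau^2\circ\tau=\lambda\,\tau,
\]
by antilinearity. Hence $\lambda\in\R^*$.

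\textbf{Normalisation and existence.} Rescaling $\tau$ by $\mu\in\C^*$ replaces $\lambda$ by $|\mu|^2\lambda$. Taking $\mu=|\lambda|^{-1/2}$ we get $\tau^2=\pm\mathrm{Id}$, so $E$ carries a real or a quaternionic structure.

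\textbf{Not both.} Suppose $\tau_1$ is real and $\tau_2$ is quaternionic. Then $\tau_2\circ\tau_1$ is a holomorphic automorphism of $E$ covering the identity, so $\tau_2=c\,\tau_1$ with $c\in\C^*$. This gives
\[
\tau_2^2=c\,\tau_1\,c\,\tau_1=|c|^2\tau_1^2=|c|^2\,\mathrm{Id},
\]
which is positive, whereas $\tau_2^2=-\mathrm{Id}$. This is a contradiction.

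\textbf{Uniqueness.} Let $\tau,\tau'$ be two structures of the same type. As above, $\tau'=c\,\tau$ with $c\in\C^*$, and comparing squares forces $|c|=1$. Write $c=e^{i\theta}$ and set $f=e^{i\theta/2}\mathrm{Id}_E$. Then
\[
\tau' f=e^{i\theta}\,\tau\,(e^{i\theta/2}\cdot)=e^{i\theta}e^{-i\theta/2}\,\tau=e^{i\theta/2}\,\tau=f\,\tau,
\]
so $f$ is an isomorphism $(E,\tau)\to(E,\tau')$.

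\textbf{The decomposition.} Every point of $\R\mathcal{N}^{st}(r,d)$ is the isomorphism class of a stable $E$ with $\sigma^*\ov{E}\simeq E$. The type of its structure (real or quaternionic) is well defined by the previous steps, which yields the disjoint decomposition \eqref{decomp_real_quat_of_geom_stable_locus}.

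The only substantive input is the identity $\End(E)=\C\,\mathrm{Id}$ for stable bundles; the remaining steps are short computations with antilinear maps. The main point needing care is getting the conjugations right when composing antilinear maps, in particular in showing that $\lambda$ is real.
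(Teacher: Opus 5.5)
Your proof is correct, and it is essentially the argument behind the cited \cite[Proposition 2.8]{Sch_JSG}; the paper itself only quotes that result. Stability makes $\tau^2$ a scalar, antilinearity forces that scalar to be real so it can be rescaled to $\pm\mathrm{Id}$, and writing a second structure as $c\,\tau$ and comparing squares gives both the real/quaternionic dichotomy and uniqueness (via $e^{i\theta/2}\mathrm{Id}_E$).
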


\begin{remark}
\label{remarkrealstability}
A real/quaternionic vector bundle $(E,\tau)$ is called (semi)stable if for each $\tau$-invariant subbundle $F \subseteq E$, we have $\mu(F) (\leq) \mu(E)$. If a real/quaternionic vector bundle $(E,\tau)$ is real/quaternionic stable, it is not always the case that the underlying vector bundle $E$ is stable (when it is, we will call $(E,\tau)$ \textit{geometrically stable}). Indeed, consider for instance a stable vector bundle $\mathcal{F}$ such that $\mathcal{F} \neq \sigma^*(\ov{\mathcal{F}})$. Then the vector bundle $E=\mathcal{F} \oplus \sigma^*(\ov{\mathcal{F}})$ is polystable but not stable and can be endowed with a real structure $\tau_1(x,y)=(\widetilde{\sigma}^{-1}(y)\widetilde{\sigma}(x))$ and a quaternionic structure $\tau_2(x,y)=(\widetilde{\sigma}(y),-\widetilde{\sigma}(x))$, where $\widetilde{\sigma} : \sigma^*(\ov{F}) \to F$ is the canonical map covering $\sigma : X \to X$. It is not hard to check that both $(E,\tau_1)$ and $(E,\tau_2)$ are real/quaternionic stable.   
\end{remark}

We then have the following result, classifying real/quaternionic semistable and stable bundles.

\begin{prop}{\cite[Proposition 2.7]{Sch_JSG}}
\label{realquaternionicstability}
A real/quaternionic bundle $(E,\tau)$ is real/quaternionic semistable if and only if the underlying bundle $E$ is semistable.  If $(E,\tau)$ is stable in the real/quaternionic sense, we have the following possibilities:
\begin{itemize}
    \item The underlying vector bundle $E$ is stable, in which case we will say that $(E,\tau)$ is \textit{geometrically stable}.
    \item The underlying vector bundle $E$ is not stable, in which case there exists a stable  bundle $\mathcal{E}$ such that $\mathcal{E} \not\simeq (\sigma^*\overline{\mathcal{E}})$ and an isomorphism \begin{equation}
    \label{notgeomstable}
        E \simeq \mathcal{E} \oplus \sigma^*(\overline{\mathcal{E}})
    \end{equation} with real/quaternionic structure $\tau$ given by exchanging the factors in (\ref{notgeomstable}) (see Remark \ref{remarkrealstability}).
\end{itemize}
\end{prop}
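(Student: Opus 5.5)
The first assertion is that real/quaternionic semistability of $(E,\tau)$ is equivalent to semistability of $E$. Semistability of $E$ trivially implies semistability in the real/quaternionic sense, since $\tau$-invariant subbundles are subbundles. For the converse, I would assume $E$ is not semistable and use the Harder--Narasimhan filtration. Its first term $E_1$ (the maximal destabilizing subbundle) is unique, so it is preserved by every automorphism and by every antiholomorphic self-map covering $\sigma$. Concretely, $\sigma^*\overline{E_1}$ is the maximal destabilizing subbundle of $\sigma^*\overline{E}$, because $\sigma^*\overline{(\cdot)}$ preserves ranks, degrees and the subbundle lattice. Hence $\alpha(E_1)=\sigma^*\overline{E_1}$, where $\alpha:E\to\sigma^*\overline{E}$ is the isomorphism attached to $\tau$. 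This says that $\tau(E_1)=E_1$, so $E_1$ is a $\tau$-invariant subbundle with $\mu(E_1)>\mu(E)$, which contradicts real/quaternionic semistability.

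For the stable case, suppose $(E,\tau)$ is stable in the real/quaternionic sense but $E$ is not stable. I would first show that $E$ is polystable, using the socle, i.e.\ the maximal polystable subbundle of slope $\mu(E)$. It is canonical, so by the same argument as above it is $\tau$-invariant. Real/quaternionic stability then forces it to equal $E$, since a proper $\tau$-invariant subbundle of equal slope is not allowed. Hence $E\simeq\bigoplus_i \mathcal{E}_i^{\oplus k_i}$ with the $\mathcal{E}_i$ stable of slope $\mu(E)$ and pairwise non-isomorphic.

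Next, $\sigma^*\overline{(\cdot)}$ permutes the isotypic components, and $\tau$ maps the component of type $\mathcal{E}_i$ onto the component of type $\sigma^*\overline{\mathcal{E}_i}$. Given any stable summand $\mathcal{E}\subset E$, the subbundle $\mathcal{E}+\tau(\mathcal{E})$ is $\tau$-invariant of slope $\mu(E)$ and is proper unless it equals $E$. So $E=\mathcal{E}+\tau(\mathcal{E})$, which has rank at most $2\rk\mathcal{E}$. Since $E$ is not stable, the sum must be direct and $E\simeq\mathcal{E}\oplus\sigma^*\overline{\mathcal{E}}$.

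It remains to show $\mathcal{E}\not\simeq\sigma^*\overline{\mathcal{E}}$. This is the main obstacle. Suppose instead that $E\simeq\mathcal{E}^{\oplus 2}$. Then $E\simeq\mathcal{E}\otimes\C^2$, and $\tau$ corresponds to an antilinear map on $\C^2$ twisted by the structure $\tau_0$ that $\mathcal{E}$ carries by Proposition~\ref{realHiggsbundlemoduli}. The plan is to show that this yields a $\tau$-invariant line $L\subset\C^2$, and hence a $\tau$-invariant subbundle $\mathcal{E}\otimes L$ of the same slope, a contradiction.

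To carry this out, write $\tau=\tau_0\otimes j$, where $j$ is antilinear on $\C^2$ and $j^2=\pm 1$ according to whether $\tau$ and $\tau_0$ have the same type or opposite types.
\begin{itemize}
\item If $j^2=1$, then $j$ is a real structure on $\C^2$, so it has a fixed vector $v$. The line $L=\C v$ is $j$-invariant, and $\mathcal{E}\otimes L$ is the required $\tau$-invariant subbundle.
\item If $j^2=-1$, every antilinear $j$ on $\C^2$ with $j^2=-1$ is conjugate to the standard quaternionic one, and such a $j$ has no invariant line. In this case the invariant subbundle has to come from $\mathcal{E}$ instead: I would rechoose the structure on $\mathcal{E}$ so that the twisting map on $\C^2$ becomes a real structure, i.e.\ use $\mathcal{E}$ with a structure of the type of $\tau$. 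This is possible only when $\mathcal{E}$ admits such a structure, which Proposition~\ref{realHiggsbundlemoduli} forbids if the types differ.
\end{itemize}
So this last subcase genuinely needs care. I expect the intended statement to hold after checking, through endomorphism algebras, that a $\tau$-invariant proper subbundle exists. The remaining check is that $\tau$ acts on $\End(E)=M_2(\C)$ by a real form, namely $M_2(\R)$ or $\H$. In the $M_2(\R)$ case there is a real rank-one idempotent, whose image is a $\tau$-invariant destabilizing subbundle. The $\H$ case is the delicate one and must be ruled out or shown compatible with the statement.

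Finally, I would note that in the resulting decomposition $E\simeq\mathcal{E}\oplus\sigma^*\overline{\mathcal{E}}$ the structure $\tau$ exchanges the two factors, as in Remark~\ref{remarkrealstability}.
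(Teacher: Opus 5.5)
Your treatment of semistability (uniqueness of the maximal destabilizing subbundle), of polystability (the socle is canonical, hence $\tau$-invariant) and of the splitting $E=\mathcal{E}\oplus\tau(\mathcal{E})\simeq\mathcal{E}\oplus\sigma^*\ov{\mathcal{E}}$ is sound. The gap is in your last step, and it cannot be closed. The case $j^2=-1$ that you leave open (where $\tau_0$ and $\tau$ have opposite types) is a genuine counterexample to the clause $\mathcal{E}\not\simeq\sigma^*\ov{\mathcal{E}}$, not a defect of your method. You already noted the decisive fact: an antilinear $j$ with $j^2=-1$ has no invariant line. Every slope-$\mu(E)$ subbundle of $\mathcal{E}\otimes\C^2$ of intermediate rank is of the form $\mathcal{E}\otimes L$, so this says precisely that $(E,\tau)$ is stable.

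Concretely, let $L$ be a real line bundle of degree $d$ with real structure $\tau_0$. Set $E=L\oplus L$ and $\tau(u,v)=(-\tau_0(v),\tau_0(u))$, so that $\tau^2=-\mathrm{Id}$. A degree-$d$ line subbundle of $E$ is $\{(au,bu)\}$ for some $(a,b)\neq(0,0)$, and $\tau$ sends it to the one given by $(-\ov{b},\ov{a})$. These coincide only if $(-\ov{b},\ov{a})=\lambda(a,b)$, which forces $(1+|\lambda|^2)\,b=0$ and then $a=b=0$. Hence $(E,\tau)$ is quaternionic stable and $E$ is not stable. By uniqueness of the polystable decomposition, any stable $\mathcal{E}$ with $E\simeq\mathcal{E}\oplus\sigma^*\ov{\mathcal{E}}$ is isomorphic to $L\simeq\sigma^*\ov{L}$. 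The same formula applied to a geometrically stable quaternionic bundle gives a real stable counterexample.

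So stop trying to rule out the $\H$ case; what your argument actually proves is a corrected trichotomy. If $(E,\tau)$ is stable in the real/quaternionic sense, then one of the following holds:
\begin{itemize}
\item $E$ is stable;
\item $E\simeq\mathcal{E}\oplus\sigma^*\ov{\mathcal{E}}$ with $\mathcal{E}$ stable, $\mathcal{E}\not\simeq\sigma^*\ov{\mathcal{E}}$, and $\tau$ exchanging the factors;
\item $\mathcal{E}\simeq\sigma^*\ov{\mathcal{E}}$ is stable and carries a structure $\tau_0$ of the type opposite to $\tau$, and $(E,\tau)\simeq(\mathcal{E}\otimes\C^2,\tau_0\otimes j)$ with $j$ the standard quaternionic structure on $\C^2$.
\end{itemize}
Your $j^2=1$ branch shows that the same-type situation is never stable, and the $j^2=-1$ branch shows that the opposite-type situation always is. This correction does not affect how the paper uses the proposition. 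The decompositions after Proposition \ref{polystable-real} and in the proof of Theorem \ref{mainresult} only require $E$ to be polystable with summands either in $\R\mathcal{N}^{st}$ or of the form $\mathcal{F}\oplus\sigma^*\ov{\mathcal{F}}$ with $\mathcal{F}$ stable, and you have established exactly that.
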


If $r,d$ are not coprime, then not all real points $E \in \R\mathcal{N}(r,d)$ admit the modular interpretation explained above. We can, however, still give the following description of polystable bundles in the real locus, as discussed in \cite[Section 2.5]{Sch_JSG}.

\begin{prop}
\label{polystable-real}
Given a polystable vector bundle $E\in \R \mathcal{N}(r,d)$, there  exist stable real or quaternionic bundles $(E_1,\tau_1),$ $\dots,(E_\ell,\tau_\ell)$, of slope $\frac{d}{r}$ and an isomorphism of vector bundles $$E \simeq E_1 \oplus \cdots \oplus E_\ell .$$ \end{prop}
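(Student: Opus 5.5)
The plan is to decompose $E$ into isotypic components and show that $\sigma$ permutes them. Write the polystable bundle as
$$E \simeq \bigoplus_{i=1}^{k} F_i^{\oplus n_i},$$
where the $F_i$ are pairwise non-isomorphic stable bundles of slope $d/r$. Set $\sigma(F) := \sigma^*(\overline{F})$. This operation preserves stability, rank and degree, and it sends direct sums to direct sums.

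First, the hypothesis $E \in \R\mathcal{N}(r,d)$ means $\sigma(E)$ is S-equivalent to $E$. Both are polystable, and polystable bundles are determined by their graded objects, so $\sigma(E) \simeq E$.

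Next, we apply Krull–Schmidt uniqueness of the decomposition into stable summands of equal slope. It shows that the involution $F \mapsto \sigma(F)$ permutes the set $\{F_1,\dots,F_k\}$ and preserves the multiplicities $n_i$. The orbits of this involution therefore have size one or two, and we treat the two kinds separately.

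\emph{Fixed summands.} Suppose $\sigma(F_i) \simeq F_i$. Since $F_i$ is stable, Proposition \ref{realHiggsbundlemoduli} gives $F_i$ a real or quaternionic structure $\tau_i$, which is stable in the real/quaternionic sense. Then $F_i^{\oplus n_i}$ is the direct sum of $n_i$ copies of $(F_i,\tau_i)$.

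\emph{Exchanged pairs.} Suppose $\sigma(F_i) \simeq F_j$ with $j \neq i$. Then $n_i = n_j$, and $F_i \oplus F_j \simeq F_i \oplus \sigma(F_i)$. By Remark \ref{remarkrealstability}, this bundle carries a real structure $\tau_1$ that swaps the two factors, and $(F_i \oplus \sigma(F_i),\tau_1)$ is real stable. We take $n_i$ copies of it.

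Collecting all of these pieces writes $E$ as $E_1 \oplus \cdots \oplus E_\ell$, where each $E_a$ is a stable real or quaternionic bundle of slope $d/r$. That is exactly the statement.

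The only delicate point is the second step. We need $\sigma(E) \simeq E$ as bundles, not merely as points of the moduli space. This holds because S-equivalence classes of polystable bundles are isomorphism classes. Krull–Schmidt then applies to the decomposition into stable summands of the same slope, using $\Hom(F_i,F_j) = 0$ for $i \neq j$ and $\End(F_i) = \C$. Everything else is bookkeeping.

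Note that real and quaternionic pieces may be mixed within a single $E$. This is why the statement asks only for "real or quaternionic" summands, rather than a global structure on $E$.
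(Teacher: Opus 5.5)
Your argument is correct, and it follows the standard route. You note that $\sigma^*(\overline{E})$ is again polystable, so S-equivalence forces $\sigma^*(\overline{E})\simeq E$; you use Krull--Schmidt to let $\sigma$ permute the isotypic summands; each $\sigma$-fixed stable summand is real or quaternionic by Proposition \ref{realHiggsbundlemoduli}; and each exchanged pair $F\oplus\sigma^*(\overline{F})$ is real stable by Remark \ref{remarkrealstability}. The paper gives no proof of its own here and instead cites \cite[Section 2.5]{Sch_JSG}, and your argument also yields directly the refined decomposition into summands $\mathcal{E}_i$ and $\mathcal{F}_i\oplus\sigma^*(\overline{\mathcal{F}}_i)$ that the paper records right after the proposition.
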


Combining Propositions \ref{realquaternionicstability} and \ref{polystable-real}, we see that, for every polystable vector bundle $E\in \R \mathcal{N}(r,d)$, there exist vector bundles $\mathcal{E}_i \in \R \mathcal{N}^{st}(s_ir',s_id')$ and $\mathcal{F}_i \in \mathcal{N}^{st}(h_ir',h_id')$, and natural numbers $s_1,\dots,s_k$ and $h_1,\dots,h_l$ such that 
$$E \cong \mathcal{E}_1 \oplus \cdots \oplus \mathcal{E}_k \oplus \big(\mathcal{F}_1 \oplus \sigma^*(\overline{\mathcal{F}}_1)\big) \oplus \cdots \oplus \big(\mathcal{F}_l \oplus \sigma^*(\overline{\mathcal{F}}_l)\big).$$

\subsection{Real points of Picard varieties}
\label{realPicard}
Notice that every vector bundle of rank $1$ is semistable and we have thus an equality $\mathcal{N}(r,d)=\Pic_d$, where $\Pic_d$ is the connected component of the Picard group of $X$ parameterizing line bundles of degree $d$ on $X$. As in \eqref{decomp_real_quat_of_geom_stable_locus}, we write $\Pic_d^{\R}$ for line bundles $L \in \R\Pic_d$ admitting a real structure and $\Pic_d^{\H}$ for line bundles $L \in \R\Pic_d$ admitting a quaternionic structure. We now review the description of the real locus $\R \Pic_d$ for any $d$, obtained in \cite{GH}.

\subsubsection{Case when the curve has real points}\label{picardwithrealpoints} 

Assume first that $\R X \neq \emptyset$, with $n > 0$ connected components. Then $\R\Pic_d = \Pic_d^\R$ consists of isomorphism classes of real line bundles and it has $2^{n-1}$ connected components (\cite{GH}). These connected components are distinguished by the first Stiefel-Whitney class $s = (s_1, \ldots, s_n) \coloneqq w_1(\R L) \in H^1(\R X; \Z/2\Z) \simeq (\Z / 2\Z)^n$ of a real line bundle $L \in Pic_d^\R$, which is subject to the condition $s_1 + \ldots + s_n = d\ \mathrm{mod}\,{2}$. More precisely, the continuous map
$$
\begin{array}{rcl}
    \R\Pic_d & \longrightarrow & (\Z/2\Z)^n  \\
    L & \longmapsto & s \coloneqq w_1(\R L)
\end{array}
$$ induces a bijection 
$
\pi_0(\R\Pic_d) 
\simeq 
\{ s \in (\Z/2\Z)^n\ |\ s_1 +\, \ldots\, + s_n = d\ \text{mod}\ 2\} \simeq (\Z/2\Z)^{n-1}.
$ 

\subsubsection{Case when the curve has no real points}\label{picardwithoutrealpoints}
Assume now that $\R X = \emptyset$. When $X$ has no real points, real vector bundles must have even degree, while quaternionic vector bundles can now occur in odd rank (subject to the condition $d + r(g-1) \equiv 0\ \mathrm{mod}\,2$). The real locus of the Picard variety $\Pic_d$ is given as follows \cite{GH}:
\begin{itemize}
\item $\R\Pic_{2d} = \Pic^{\R}_{2d}$ if $g$ is even.
\item $\R\Pic_{2d} = \Pic^{\R}_{2d} \sqcup \Pic^{\H}_{2d'}$ if $g$ is odd.
\item $\R\Pic_{2d+1} = \Pic^{\H}_{2d+1}$ if $g$ is even.
\item $\R\Pic_{2d+1} = \emptyset$ if $g$ is odd.
\end{itemize}
So, when $\R\Pic_d\neq\emptyset$, the pointed set $\pi_0(\R\Pic_d)$ consists of either one or two points. We denote the elements of $\pi_0(\R\Pic_d)$ by $\R$ and $\H$. Namely, $\R$ denotes the connected component of $\R\Pic_d$ containing real line bundles (if there are any), and $\H$ denotes the connected component of $\R\Pic_d$ containing quaternionic line bundles (if there are any).

\section{Gauge-theoretic construction of moduli spaces}\label{gauge_theory_moduli}

The idea that moduli of real objects should just be real points of moduli spaces defined over the real numbers is very natural. Unfortunately, the presence of non-trivial automorphisms is a well-known obstruction to that, the classic distinction being that the field of moduli is in general smaller than the field of definition. This is precisely what happens for vector bundles: due to the presence of non-trivial automorphisms (even for a stable vector bundle), a real point of the moduli space is not necessarily defined over the reals (it could be a quaternionic vector bundle, or a direct sum of a real and a quaternionic one, \textit{etc}). An additional complication, also due to the presence of non-trivial automorphisms is that objects defined over $\mathbb{R}$ which are isomorphic over $\mathbb{C}$ are not necessarily isomorphic over $\mathbb{R}$, or even $S$-equivalent in the case of vector bundles (see \cite{Sch_JSG}). So, to construct topological spaces that behave as moduli spaces of real (and quaternionic) vector bundles in a reasonable way, one has to use another approach. In the present context, the one that makes sense is the gauge-theoretic approach \cite{BHH, Sch_GD}. This kind of gauge theoretic approach was later extended to the case of any reductive group $G$ and principal $G$-bundles in \cite{BGH}.

\subsection{Lagrangian quotients}

The idea in \cite{Sch_GD} is to look at Lagrangian quotients of the space of unitary connections. More precisely, if we fix a $C^\infty$ real vector bundle $(E,\tau)$ over a compact Klein surface $(X,\sigma)$, equipped with a Hermitian metric $h$, the space of unitary connection $\mathcal{A}_h$ on that bundle is equipped with an induced anti-symplectic involution $\beta_\tau : A \mapsto A^\tau$, whose fixed points are in bijection with $\tau$-compatible holomorphic structures on $E$, meaning those holomorphic structures that make $\tau$ anti-holomorphic. Moreover, any two such holomorphic structures are isomorphic over $\R$ if and only if the corresponding connections are contained in a same orbit of the real gauge group $\RGC$, meaning the group of automorphisms of $E$ that commute with $\tau$ (see \cite{Sch_GD}). From that point of view, the set of isomorphism classes of $\tau$-compatible holomorphic structures on $(E,\tau)$ is in bijection with $\mathcal{A}_h^\tau / \RGC$, where $\mathcal{A}_h^\tau := \mathrm{Fix}(\beta_\tau)$. As a matter of fact, an analogous result holds for quaternionic vector bundles as well: if $(E,\tau)$ is quaternionic, the induced transformation $\beta_\tau : \mathcal{A}_h \to \mathcal{A}_h$ is still an involution (because the center of the gauge group acts trivially on $\mathcal{A}_h$), and it characterizes $\tau$-compatible holomorphic structures on $E$, as noted in \cite{Sch_JSG}. So it is easy to construct a topological space of real or quaternionic vector bundles that are smoothly isomorphic to a given $(E,\tau)$, but in order to obtain moduli spaces that are finite dimensional manifolds, we need to restrict to unitary connections defining \textit{polystable} $\tau$-compatible holomorphic structures on $E$. To achieve that, we adapt Donaldson's approach and study Yang-Mills connections with a Galois symmetry. Recall that a unitary connection $A$ is called a Yang-Mills connection if its curvature $F_A$, seen as a section of the bundle of Hermitian endomorphisms of $E$ is constant and central, meaning that, necessarily, $F_A = \frac{d}{r} \mathrm{Id}_E$, where $d$ is the degree of $E$. The set $\mathcal{A}_{YM} := F^{-1}(\frac{d}{r} \mathrm{Id}_E)$ of Yang-Mills connections is invariant under the involution $\beta_\tau$ and the action of the real part $\mathcal{G}_h^\tau := \mathcal{G}_h \cap \RGC$ of the unitary gauge group $\mathcal{G}_h$ and the following analogue of Donaldson's theorem holds.

\begin{teorema}{\cite[Theorem 3.7]{Sch_JSG}}\label{lag_quot} 
    Let $(E,h,\tau)$ be a $C^\infty$ Hermitian vector bundle equipped with a real (resp.\ quaternionic) structure. There is a bijection between the set of $S$-equivalence classes of real (resp.\ quaternionic) vector bundles that are smoothly isomorphic to $(E,\tau)$ and the Lagrangian quotient $\mathcal{A}_{YM}^{\ \tau} / \mathcal{G}_h^\tau := \big(F^{-1}(\textstyle\frac{d}{r} \mathrm{Id}_E) \cap \mathcal{A}_h^\tau\big) \big/ \mathcal{G}_h^\tau\ .$
\end{teorema}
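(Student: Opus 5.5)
The plan is to adapt Donaldson's proof of the Narasimhan--Seshadri theorem equivariantly, using the involution $\beta_\tau$ on $\mathcal{A}_h$ together with its compatibility with the gauge actions. I would proceed in three steps.

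\textbf{Step 1: a Galois-equivariant Narasimhan--Seshadri statement.} Let $A\in\mathcal{A}_h^\tau$ be such that the induced $\tau$-compatible holomorphic structure $E_A$ is polystable. Then, by Proposition~\ref{polystable-real} and Proposition~\ref{realquaternionicstability}, $(E_A,\tau)$ splits as a direct sum of real/quaternionic stable bundles of slope $d/r$. By Donaldson's theorem, the complex gauge orbit $\mathcal{G}_\C\cdot A$ meets $\mathcal{A}_{YM}$. I want to show that it already meets $\mathcal{A}_{YM}^{\ \tau}$ inside the real orbit $\RGC\cdot A$. Donaldson's solution minimizes the Yang--Mills-type functional along $\mathcal{G}_\C/\mathcal{G}_h$, which is strictly convex modulo stabilizers. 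Since $\beta_\tau$ preserves the functional and conjugates the action of $\mathcal{G}_\C$ to itself via $u\mapsto \tau u\tau^{-1}$, uniqueness of the minimizer up to $\mathcal{G}_h$ should show that the Hermitian metric solving the Hermite--Einstein equation can be chosen $\tau$-invariant. Concretely, I would argue as follows. If $h_{HE}$ solves the equation, then so does $\tau^*h_{HE}$, and the uniqueness theorem says these differ by an automorphism of $E_A$. On each stable summand that automorphism is a scalar, which can be averaged away: use $\tfrac12(h_{HE}+\tau^*h_{HE})$ blockwise on isotypic components. The resulting metric gives an element $u\in\RGC$ with $u\cdot A\in\mathcal{A}_{YM}^{\ \tau}$. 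This step (real Hermite--Einstein metrics on polystable real/quaternionic bundles) is the main obstacle. The delicate case is the pairs $\mathcal{E}\oplus\sigma^*\overline{\mathcal{E}}$ and isotypic multiplicities, where the automorphism group is non-abelian. There one has to check that $\tau^*$ acts on the space of Hermite--Einstein metrics (a symmetric space for the automorphism group) as an isometric involution, so that it has a fixed point, for instance a midpoint of a geodesic.

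\textbf{Step 2: uniqueness up to the real unitary gauge group.} Let $A,A'\in\mathcal{A}_{YM}^{\ \tau}$ be two connections whose holomorphic structures are isomorphic as real/quaternionic bundles, say via $u\in\RGC$. Write $u=kp$ with $k$ unitary and $p$ positive self-adjoint, using the polar decomposition. Uniqueness of Yang--Mills connections in a complex orbit shows that $p$ lies in the stabilizer, so $k\cdot A=A'$. Because the polar decomposition is unique and $\tau$-conjugation preserves unitarity and positivity, we get $k\in\mathcal{G}_h^\tau$. Hence the map $\mathcal{A}_{YM}^{\ \tau}/\mathcal{G}_h^\tau\to\mathcal{A}_{h,ps}^{\tau}/\RGC$ is well defined and injective, and Step~1 makes it surjective. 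Here $\mathcal{A}_{h,ps}^{\tau}$ denotes the space of connections in $\mathcal{A}_h^\tau$ whose holomorphic structure is polystable.

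\textbf{Step 3: passing to $S$-equivalence.} Every semistable real/quaternionic bundle has a $\tau$-invariant Jordan--Hölder filtration. One obtains it by taking the socle, or by intersecting a filtration with its $\tau$-image, using Proposition~\ref{realquaternionicstability}. The associated graded object then carries an induced real/quaternionic structure of the same smooth type, because topological type is determined by invariants (degree, and $w_1$ on real components) that are additive in extensions. So each $S$-equivalence class contains a unique real/quaternionic polystable representative up to real isomorphism. By the remark preceding the theorem, real isomorphism classes of $\tau$-compatible holomorphic structures on $(E,\tau)$ are exactly $\RGC$-orbits in $\mathcal{A}_h^\tau$. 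Combining this with Steps~1 and~2 yields the claimed bijection. The quaternionic case is identical, since $\beta_\tau$ remains an involution.
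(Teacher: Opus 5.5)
Your route is the right one. It matches what the paper attributes to \cite{Sch_JSG}; the paper itself only cites that reference and gives no proof. The route has three parts:
\begin{enumerate}
\item An equivariant Donaldson--Narasimhan--Seshadri step: a $\tau^*$-invariant Hermite--Einstein metric produces an element of $\RGC$ moving $A$ into $\mathcal{A}_{YM}^{\ \tau}$.
\item Uniqueness up to $\mathcal{G}_h^\tau$, from the $\tau$-equivariance of the polar decomposition.
\item Passage from polystable objects to $S$-equivalence classes.
\end{enumerate}
Apart from the point below, your Steps~1 and~2 are correct. The non-abelian case in Step~1 is in fact easier than you fear. On a polystable $E_A=\bigoplus_i F_i\otimes V_i$, with the $F_i$ stable and pairwise non-isomorphic, the Hermite--Einstein metrics are exactly the orthogonal sums $\bigoplus_i h_{F_i}\otimes q_i$, where $h_{F_i}$ is a fixed Hermite--Einstein metric on $F_i$ and $q_i$ a Hermitian form on $V_i$. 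These form a convex cone, so $\tfrac12(h_{HE}+\tau^*h_{HE})$ already works without any blockwise care. Your geodesic-midpoint argument is also fine.

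The one step not justified by what you cite is the claim, at the start of Step~1, that $(E_A,\tau)$ splits as a $\tau$-compatible direct sum of real/quaternionic stable bundles.
\begin{itemize}
\item \emph{Why the citation fails.} Propositions~\ref{polystable-real} and~\ref{realquaternionicstability} only give an isomorphism of underlying vector bundles with a sum of pieces, each admitting \emph{some} real or quaternionic structure, possibly of mixed types. They do not give a splitting compatible with the given $\tau$.
\item \emph{Why it matters.} This is exactly the fact the paper singles out as part of the proof: real/quaternionic polystability is equivalent to complex polystability. Your Step~3 depends on it in two places. First, a $\tau$-compatible complex-polystable structure must be its own real (resp.\ quaternionic) graded object. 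Second, the socle layers, with their induced structures, must be real/quaternionic polystable. Only then does the socle graded object agree with the graded object of a real Jordan--H\"older filtration. That graded object is unique up to real isomorphism by the Jordan--H\"older theorem in the abelian category of real, resp.\ quaternionic, semistable bundles of slope $d/r$.
\item \emph{The fix.} Let $F\subset E$ be a $\tau$-invariant subbundle of slope $d/r$ in a polystable $E$, and pick a holomorphic projection $\pi:E\to F$, which exists by polystability. Then $\pi':=\tfrac12(\pi+\tau\pi\tau^{-1})$ is again a holomorphic projection onto $F$, and it commutes with $\tau$. So $\ker\pi'$ is a $\tau$-invariant holomorphic complement, again polystable. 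Start from a $\tau$-invariant subbundle of slope $d/r$ of minimal rank, which is real/quaternionic stable, and induct on the rank. This gives the required $\tau$-compatible splitting.
\end{itemize}
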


This means that, given a real/quaternionic Hermitian vector bundle $(E,\tau)$ on $(X,\sigma)$, a $\beta_\tau$-fixed unitary connection $A \in \mathcal{A}_h^\tau$ defines a polystable $\tau$-compatible holomorphic structure on $(E, \tau)$ if and only if $A$ is a Yang-Mills connection, meaning that $A \in \mathcal{A}_{YM}^{\ \tau} := \mathcal{A}_h^\tau \cap \mathcal{A}_{YM}$. Moreover, any two such connections define isomorphic real or quaternionic structures if and only if they are contained in a same orbit of the action of the real part of the unitary gauge group $\mathcal{G}_h^\tau$. In this statement, the polystability condition is to be understood in the real or quaternionic sense, and part of the proof is showing that this is equivalent to being polystable as a complex object \cite{Sch_JSG}. As mentioned before, this property does not hold for stable real or quaternionic vector bundles, which in general are only polystable when viewed as holomorphic vector bundles. However, by looking at Galois-invariant Yang-Mills connnections that are irreducible as unitary connections (meaning that their stabilizer is the center of the unitary gauge group), we can construct moduli spaces of geometrically stable real/quaternionic vector bundles in a similar fashion. We refer to \cite{Sch_GD} for the details of that construction.

\subsection{Connectedness}

The Lagrangian quotients of Theorem \ref{lag_quot} come equipped with maps
\begin{equation}\label{map_from_Lag_quot}
\begin{array}{rcl}
f_\tau : \mathcal{A}_{YM}^{\ \tau} / \mathcal{G}_h^\tau & \longmapsto & \mathcal{A}_{YM} / \mathcal{G}_h \\
 \mathcal{G}_h^\tau \cdot A & \longmapsto &  \mathcal{G}_h \cdot A
\end{array}
\end{equation}
sending the $\mathcal{G}_h^\tau$-orbit of a Galois-invariant connection $A \in \mathcal{A}_{YM}^{\ \tau} \subset \mathcal{A}_{YM}$ to its $\mathcal{G}_h$-orbit. Note that $\tau$ induces an involution on $\mathcal{A}_{YM} / \mathcal{G}_h$ that in fact depends only on the real structure $\sigma : X \to X$. Indeed, by the Donaldson-Narasimhan-Seshadri theorem, there is a homeomorphism $\mathcal{A}_{YM} / \mathcal{G}_h \simeq \mathcal{N}(r,d)$ and the induced involution is just $\mathcal{E} \mapsto \sigma^*\overline{\mathcal{E}}$. The point is that there is an inclusion
$
\Imm\, f_\tau \subset \mathrm{Fix}_\sigma \big( \mathcal{A}_{YM} / \mathcal{G}_h \big) = \mathbb{R} \mathcal{N}(r,d)\ ,
$
meaning that Lagrangian quotients of (Galois-invariant) Yang-Mills connections are mapped into real points of the moduli space of semistable holomorphic vector bundles of rank $r$ and degree $d$. Moreover, in the stable locus of $\mathcal{N}(r,d)$, any real point comes from a uniquely determined connection $A \in \mathcal{A}_{YM}^{\ \tau}$ on some real or quaternionic vector bundle $(E,\tau)$, because a Galois-invariant geometrically stable vector bundle $\mathcal{E} \simeq \sigma^* \overline{\mathcal{E}}$ is either real or quaternionic, in a unique way up to real or quaternionic isomorphism (see Proposition \ref{realHiggsbundlemoduli}). So we can write the geometrically stable locus $\mathbb{R} \mathcal{N}^{st}(r,d)$ as a disjoint union of restricted Lagrangian quotients:
\begin{equation}\label{restricted_Lag_quot}
\mathbb{R} \mathcal{N}^{st}(r,d) \simeq 
\bigsqcup_{[\tau] \in I_{r,d}} \mathcal{A}_{YM}^{\ \tau, st} / \mathcal{G}_h^\tau\ ,
\end{equation}
where $\mathcal{A}_{YM}^{\ \tau, st} := f_\tau^{-1}(\Imm\, f_\tau \cap \mathcal{N}^{st}(r,d))$ and $I_{r,d}$ is the set of so-called \text{topological types} of real and quaternionic vector bundles of rank $r$ and degree $d$ over $(X, \sigma)$, i.e.\ the set of equivalence classes of such objects up to $C^\infty$ isomorphism. This set is finite because, as shown in \cite{BHH}, real or quaternionic vector bundles are topologically isomorphic if and only if they have the same rank and degree, as well as the same real invariants $w_1(E^\tau)$ when $\tau$ is real and $X^\sigma \neq \emptyset$. In particular, when $\gcd(r,d) = 1$, we have $\mathcal{N}^{st}(r,d) = \mathcal{N}(r,d)$ and
\begin{equation}\label{decomp_real_locus_coprime_case}
\mathbb{R} \mathcal{N}(r,d) \simeq 
\bigsqcup_{[\tau] \in I_{r,d}} \mathcal{A}_{YM}^{\ \tau} / \mathcal{G}_h^\tau\ ,
\end{equation}
the upshot being that, in that case, this is exactly the decomposition in connected components of $\mathbb{R} \mathcal{N}(r,d)$, as follows from Theorem \ref{Lag_quot_connected}. Note, however, that this result holds without assuming that $r$ and $d$ are coprime.

\begin{teorema}\label{Lag_quot_connected}
    Let $g \geq 2$, $r \geq 1$ and $d \in \Z$ be integers. Let $(X,\sigma)$ be a compact connected Klein surface of genus $g$ and let $(E,h)$ be a $C^\infty$ Hermitian vector bundle of rank $r$ and degree $d$, equipped with a real or quaternionic structure $\tau$. Let $\mathcal{A}_{YM}^{\ \tau}$ be the space of Galois-invariant Yang-Mills connections on $(E, h, \tau)$ and let $\mathcal{G}_h^\tau$ be the real part of the $h$-unitary gauge group of $E$. Then the Lagrangian quotient $\mathcal{A}_{YM}^{\ \tau} / \mathcal{G}_h^\tau$ is non-empty and connected.
\end{teorema}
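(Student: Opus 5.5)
The plan is to prove non-emptiness by an explicit construction and connectedness by a retraction/density argument modelled on the complex case. Via Theorem \ref{lag_quot}, $\mathcal{A}_{YM}^{\ \tau}/\mathcal{G}_h^\tau$ is in bijection with $S$-equivalence classes of semistable real (resp.\ quaternionic) bundles smoothly isomorphic to $(E,\tau)$. It therefore suffices to show two things: this set is non-empty, and the space $\mathcal{A}^\tau_{h,ss}$ of $\beta_\tau$-invariant connections defining semistable $\tau$-compatible holomorphic structures is connected. The latter reduces to the former being connected modulo $\mathcal{G}_h^\tau$, because there is a continuous surjection $\mathcal{A}^\tau_{h,ss}\to \mathcal{A}_{YM}^{\ \tau}/\mathcal{G}_h^\tau$. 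This surjection comes from the $\beta_\tau$-equivariant Yang--Mills flow, or equivalently the Galois-invariant Harder--Narasimhan/Jordan--Hölder limiting procedure.

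\textbf{Connectedness.} The space $\mathcal{A}_h^\tau$ of all $\tau$-compatible unitary connections is an affine space modelled on $\Omega^{0,1}(\End E)^\tau$, hence contractible. I would stratify it by $\tau$-invariant Harder--Narasimhan types, as in Atiyah--Bott, and show that each unstable stratum has real codimension at least $2$ inside the (infinite-dimensional) real affine space. Then its complement, the semistable locus, is connected.

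The codimension count is the heart of the argument. In the complex setting the stratum of type $\mu$ has complex codimension $d_\mu=\sum_{i<j}\big(r_ir_j(g-1)+ (r_id_j-r_jd_i)\big)$, coming from $H^1$ of $\mathrm{Hom}(D_i,D_j)$. The involution $\beta_\tau$ acts antilinearly on the normal space $H^1(X,\mathrm{Hom}(D_i,D_j))$. Its fixed part is therefore a totally real subspace of real dimension exactly $d_\mu$. So the real codimension of the real stratum equals the complex codimension of the complex one. The work is to verify $d_\mu\ge 2$ for every unstable type, which holds since $g\ge2$ gives $r_ir_j(g-1)+1\ge 2$.

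There is one subtlety. The HN filtration of a $\tau$-compatible structure is $\tau$-invariant by uniqueness, but the graded pieces may be real, quaternionic, or swapped pairs. I would still do the normal-space computation on the full complex $H^1$ with its induced antilinear involution, which avoids classifying the pieces. This step, making the real Atiyah--Bott codimension estimate rigorous (local slice / transversality for the fixed loci), is the main obstacle. If the real codimension argument proves delicate, the fallback is to invoke equivariant Morse theory of the Yang--Mills functional restricted to the fixed locus, whose indices are half the complex indices.

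\textbf{Non-emptiness.} I would exhibit a semistable $\tau$-compatible holomorphic structure of the given topological type. By \cite{BHH} the type is determined by rank, degree and, when $\R X\ne\emptyset$ and $\tau$ is real, $w_1(E^\tau)$.

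In the real case, choose real line bundles $L_1,\dots,L_r$ of degrees $d_i$ as equal as possible, with Stiefel--Whitney classes summing to the prescribed one, using the description of $\R\Pic$ in Section \ref{realPicard}. This gives a polystable bundle only if $r\mid d$. Otherwise one takes a geometrically stable real bundle, whose existence follows from the non-emptiness of the coprime/stable real loci (\cite{Sch_JSG}, dimension count: the real locus of the open stable set has the expected dimension), twisted and summed to reach degree $d$.

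In the quaternionic case, use $\mathcal{F}\oplus\sigma^*\overline{\mathcal{F}}$ pieces as in Remark \ref{remarkrealstability}, combined with quaternionic line bundles when $r$ is odd. Existence constraints match Proposition \ref{propcriteriaexist}, so every existing type is realised.
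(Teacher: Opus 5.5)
Your connectedness argument is essentially the paper's. You pass from the $\tau$-compatible semistable locus to $\mathcal{A}_{YM}^{\ \tau}$; the paper cites the deformation retraction of \cite{Sch_JDG}, you use the Galois-equivariant flow. You then note that the unstable strata $\mathcal{A}_\mu^\tau$ of the affine space $\mathcal{A}_h^\tau$ have real codimension equal to the complex codimension $d_\mu \geq 2$. One small point: with the paper's ordering $d_1/r_1 > \cdots > d_\ell/r_\ell$ the terms are $d_i r_j - d_j r_i + r_i r_j(g-1)$, so your sign is reversed unless you order slopes increasingly.

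The gap is in your separate non-emptiness argument, which fails as written.

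\textbf{Rank dividing degree.} When $\R X = \emptyset$, real line bundles exist only in even degree. So for $(r,d)=(2,2)$ there are no real line bundles of degree $d/r=1$ to sum, even though real bundles of rank $2$ and degree $2$ exist. You would need $\mathcal{F}\oplus\sigma^*\ov{\mathcal{F}}$ with its real structure, which you only invoked in the quaternionic case.

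\textbf{Coprime rank and degree.} This is the more serious problem. When $\gcd(r,d)=1$ and $r\geq 2$, semistable means stable, so nothing can be assembled from line bundles or from even-rank pieces $\mathcal{F}\oplus\sigma^*\ov{\mathcal{F}}$. Two examples: a real bundle of rank $3$ and degree $2$ when $\R X=\emptyset$, and a quaternionic bundle of rank $3$ and degree $1$ when $\R X=\emptyset$ and $g$ is even. There you fall back on the existence of geometrically stable real or quaternionic bundles of each topological type. That is precisely the non-emptiness assertion of the theorem in the coprime case; the paper deduces Theorem \ref{CC_coprime_case} from it, not the other way round. A dimension count for $\R\mathcal{N}^{st}(r,d)$ says nothing about which topological types are realized. ``Twisting and summing'' does not help either, since it produces summands of different slopes and hence a non-semistable bundle.

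\textbf{The fix.} You do not need any construction; your own codimension estimate already gives non-emptiness. First, $\mathcal{A}_h^\tau$ is non-empty: $\beta_\tau$ is an affine involution, so the midpoint of any $A$ and $\beta_\tau(A)$ is fixed. Second, the unstable locus is a countable union of submanifolds of real codimension at least $2$, so its complement is dense in $\mathcal{A}_h^\tau$ and in particular non-empty. This is exactly how the paper obtains non-emptiness, at no extra cost from the same estimate.
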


\begin{proof}
    By \cite[Cor~3.17]{Sch_JDG}, the space $\mathcal{A}_{YM}^{\ \tau}$ is a deformation retract of $\mathcal{A}_{\mu_{ss}}^{\ \tau}$ (the space of $\tau$-compatible semistable holomorphic structures on $(E,\tau)$), so it suffices to show that $\mathcal{A}_{\mu_{ss}}^{\ \tau}$ is non-empty and connected. The proof goes by induction on $r$. When $r=1$, every holomorphic line bundle is stable, so $\mathcal{A}_{YM}^{\ \tau} = \mathcal{A}_h$, which is an affine space so it is in particular non-empty and connected. Take now $r \geq 2$. The complement of $\mathcal{A}_{\mu_{ss}}^{\ \tau}$ in $\mathcal{A}_h$ is the union of the non-semistable Harder-Narasimhan strata of $\mathcal{A}_h$, meaning that
    $$
    \mathcal{A}_h \setminus \mathcal{A}_{\mu_{ss}}^{\ \tau}
    = \bigsqcup_{\mu \not= \mu_{ss}} \mathcal{A}_\mu^\tau\ ,
    $$
    where $\mathcal{A}_\mu^\tau = \mathcal{A}_\mu \cap \mathcal{A}^\tau$ coincides with the Galois-invariant part of the complex Harder-Narasimhan stratum $\mathcal{A}_\mu$ \cite{Sch_JSG}. In particular, each $\mathcal{A}_\mu^\tau$ has finite (real) codimension in $\mathcal{A}_h^\tau$, given by 
    $$
    d_\mu := \sum_{1 \leq i < j \leq \ell} \big(d_i r_j - d_j r_i + r_i r_j (g-1)\big)
    $$
    where $\ell$ is the length of the Harder-Narasimhan filtration $0 = \mathcal{E}_0 \subset \mathcal{E}_1 \subset \ldots \subset \mathcal{E}_\ell = \mathcal{E}$ of any $\mathcal{E} \in \mathcal{A}_\mu$ and $(r_i, d_i)_{1 \leq i \leq \ell}$ are the rank and degree of the semistable quotients $\mathcal{E}_i / \mathcal{E}_{i-1}$, satisfying $\frac{d_1}{r_1} > \ldots > \frac{d_\ell}{r_\ell}$. In particular, the strata $\mathcal{A}_\mu$ are indexed by the Harder-Narasimhan types $\mu = (r_i, d_i)_{1 \leq i \leq \ell}$ of holomorphic vector bundles of rank $r$ and degree $d$ on $X$ and $\mathcal{A}_{\mu_{ss}}$ is the only open stratum (case when $\ell = 1$). Note that, for given $\mu$ and $\tau$, the stratum $\mathcal{A}_\mu^{\tau}$ may or not be empty, and we refer to \cite{LS} for further details on this. At any rate, for all $\mu \not= \mu_{ss}$, the stratum $\mathcal{A}_{\mu}^{\tau}$ is contained in the closure of a stratum $\mathcal{A}_{\mu'}^{\tau}$ for some $\mu'$ of the form $((r_1, d_1), (r_2, d_2))$. Since $r \geq 2$ and $g \geq 2$, one has
    $
    d_{\mu'} = d_1 r_2 - d_2 r_1 + r_1 r_2 (g - 1) \geq 1 + (r-1)(g-1) \geq 2
    $
    so the complement of $\mathcal{A}_{\mu_{ss}}^{\ \tau}$ in $\mathcal{A}_h$ is a countable union of closed submanifolds of codimension greater than $2$, which proves that $\mathcal{A}_{\mu_{ss}}^{\ \tau}$ is connected and dense in the affine space $\mathcal{A}_h$. In particular, it is non-empty.
\end{proof}

Thus, Lagrangian quotients of Galois-invariant Yang-Mills connections are connected and this fact will be used in our proof of Theorem \ref{mainresult}, giving the connected components of $\mathbb{R} \mathcal{N}(r,d)$ in the general case (i.e.\ without assuming that $\gcd(r,d) = 1$). 

\begin{remark}
    Using results of Daskalopoulos and Uhlenbeck, Theorem \ref{Lag_quot_connected} can be strengthened to the geometrically stable case, meaning that the restricted Lagrangian quotients $\mathcal{A}_{YM}^{\ \tau, st} := f_\tau^{-1}(\Imm\, f_\tau \cap \mathcal{N}^{st}(r,d))$ are connected, provided $g > 2$ or $r >2$. The case $g=2$ and $r=2$ can be deduced from the work of Narasimhan and Ramanan in \cite{NR} (see \cite{Wang}). This implies that Decomposition \eqref{restricted_Lag_quot} is the decomposition into connected components of the geometrically stable locus $\R\mathcal{N}^{st}(r,d)$, for all $r$ and $d$, so these connected components are indexed by the topological types $I_{r,d}$ of real and quaternionic vector bundles.
\end{remark}

\subsection{The determinant morphism}\label{open_closed_decomp}

For arbitrary $r$ and $d$, let us define a morphism of real algebraic varieties 
\begin{equation}\label{obstruction_map}
    c_{r,d} : \R\mathcal{N}(r,d) \to \pi_0(\R\Pic_d)\, ,
\end{equation} by composing the determinant morphism $\det : \R\mathcal{N}(r,d) \to\R\Pic_d$
with the canonical continuous map $\R\Pic_d \to \pi_0(\R\Pic_d)$, projecting a real point of $\Pic_d$ to the connected component of $\R\Pic_d$ that contains it. For every $s \in \pi_0(\R\Pic_d)$, we put 
$
\R\mathcal{N}(r,d,s)\coloneqq c_{r,d}^{-1}(s)\ .
$
Since $\pi_0(\R\Pic_d)$ is discrete, this gives us an open-closed decomposition
\begin{equation}
\label{fibers-topologicaltype}
\R\mathcal{N}(r,d) \quad = \bigsqcup_{s\in\pi_0(\R\Pic_d)} \R\mathcal{N}(r,d,s)\,.
\end{equation}
We will see in Theorem \ref{mainresult} that, for all $s\in\pi_0(\R\Pic_d)$, the space $\R\mathcal{N}(r,d,s)$ in the open-closed decomposition \eqref{fibers-topologicaltype} is connected. Thus, to obtain the decomposition of $\R \mathcal{N}(r,d)$ into connected components, it suffices to know which ones are non-empty, which we will also determine. For now, we restrict our attention to the case when $r$ and $d$ are coprime, as this will be the base case of our proof by induction on $m := \gcd(r,d)$ in the next section.

\begin{teorema}\label{CC_coprime_case}
    Let $r$ and $d$ be integers. If $\gcd(r, d) = 1$, then, for all $s \in \pi_0(\R\Pic_d)$, the fibre $\R\mathcal{N}(r,d,s) := c_{r,d}^{-1}(s)$ is non-empty. As a consequence, when $\gcd(r,d)=1$, the determinant morphism $E \mapsto \det E$ induces a bijection $\pi_0\big(\R\mathcal{N}(r,d)\big) \simeq \pi_0\big(\R\Pic_d\big).$
\end{teorema}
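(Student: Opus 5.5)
When $\gcd(r,d)=1$ every semistable bundle is stable, so $\R\mathcal{N}(r,d)=\R\mathcal{N}^{st}(r,d)$. Decomposition \eqref{decomp_real_locus_coprime_case} then writes $\R\mathcal{N}(r,d)$ as a disjoint union of Lagrangian quotients indexed by the topological types $I_{r,d}$. By Theorem \ref{Lag_quot_connected} each of these is non-empty and connected.

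The plan is to show that the determinant map sends the piece indexed by $[\tau]$ into a single component of $\R\Pic_d$, and that the resulting map $I_{r,d}\to \pi_0(\R\Pic_d)$ is surjective. Non-emptiness of every fibre of $c_{r,d}$ follows at once. The bijection on $\pi_0$ then reduces to comparing $\R\mathcal{N}(r,d,s)$ with the union of the quotients lying over $s$, so I will also want injectivity of $I_{r,d}\to\pi_0(\R\Pic_d)$.

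\textbf{Case $\R X\neq\emptyset$.} Here $\pi_0(\R\Pic_d)$ consists of the classes $s\in(\Z/2\Z)^n$ with $\sum s_i\equiv d \bmod 2$. For $r$ odd or $d$ odd, Proposition \ref{propcriteriaexist} allows only real bundles, whose topological types are classified by $w_1(E^\tau)\in(\Z/2\Z)^n$ under the same parity constraint. Since $w_1(\det E^\tau)=w_1(E^\tau)$, the map is a bijection $I_{r,d}\to\pi_0(\R\Pic_d)$. Quaternionic bundles would need $r$ and $d$ both even, which is impossible when $\gcd(r,d)=1$.

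\textbf{Case $\R X=\emptyset$.} Here the types are just ``real'' or ``quaternionic'', subject to the conditions in Proposition \ref{propcriteriaexist}. By Remark \ref{remark-determinants}, a real bundle has real determinant, and a quaternionic bundle has quaternionic determinant when $r$ is odd and real determinant when $r$ is even. I then run through the four cases of Section \ref{picardwithoutrealpoints}.

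If $d$ is odd, $\R\Pic_d$ is non-empty only for $g$ even, where it equals $\Pic^\H_d$. Quaternionic bundles exist since $d+r(g-1)\equiv d+r\pmod 2$, and this is even because $r$ must be odd: $r$ even is excluded only when $g$ is even and $d$ is odd. Indeed, if $r$ were even then no real bundle exists ($d$ odd) and no quaternionic bundle exists ($d+r(g-1)$ odd), so $\R\mathcal{N}(r,d)=\emptyset=\R\Pic_d$ would fail. I need to check this case carefully: coprimality with $d$ odd allows $r$ even, for instance $r=2$, $d=1$, $g$ even. Then $\R\Pic_1=\Pic_1^\H\neq\emptyset$ but $\R\mathcal{N}(2,1)$ contains no real or quaternionic bundle, hence is empty by Proposition \ref{realHiggsbundlemoduli}. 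The paper's introduction lists exactly this situation as exceptional, so the statement as given must implicitly exclude it, or I am misreading $r$ even, $d$ odd. I will therefore track the parity conditions and claim surjectivity precisely where Proposition \ref{propcriteriaexist} produces a bundle of the right type.

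If $d$ is even, $r$ is odd by coprimality. Real bundles give the component $\R$. When $g$ is odd, quaternionic bundles exist ($d+r(g-1)$ is even) and have quaternionic determinant since $r$ is odd, giving the component $\H$. So the map is a bijection in both subcases.

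\textbf{Main obstacle.} The delicate part is this parity bookkeeping, in particular the case $r$ even with $d$ odd and $\R X=\emptyset$. There I expect both sides to be reconciled only by checking that every real type actually occurs. Once injectivity and surjectivity of $I_{r,d}\to\pi_0(\R\Pic_d)$ are established, each fibre $\R\mathcal{N}(r,d,s)$ is a single connected Lagrangian quotient, and the bijection $\pi_0(\R\mathcal{N}(r,d))\simeq\pi_0(\R\Pic_d)$ follows.
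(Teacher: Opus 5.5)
Your route is the paper's. You use Decomposition \eqref{decomp_real_locus_coprime_case}, Theorem \ref{Lag_quot_connected} for non-emptiness and connectedness of each Lagrangian quotient, the topological classification of real and quaternionic bundles, and a case-by-case comparison with Gross--Harris via Remark \ref{remark-determinants} and $w_1(\det E^\tau)=w_1(E^\tau)$. Your way of concluding is correct and slightly more explicit than the paper's: each fibre $c_{r,d}^{-1}(s)$ is open and closed, and by injectivity of $I_{r,d}\to\pi_0(\R\Pic_d)$ it is the image of a single connected quotient.

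The exceptional case you ran into is not a misreading. It is a genuine counterexample to the statement as written. Take $\R X=\emptyset$, $g$ even, $r$ even and $d$ odd, for instance $(r,d)=(2,1)$, which is coprime. Proposition \ref{propcriteriaexist} gives $I_{r,d}=\emptyset$, hence $\R\mathcal{N}(r,d)=\emptyset$ by Proposition \ref{realHiggsbundlemoduli}. On the other hand, $\R\Pic_d=\Pic_d^{\H}$ is non-empty.

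The paper's proof actually records this itself. Its final list contains $\R\mathcal{N}(2r,2d+1)=\emptyset$, and Table \ref{CC_count_curves_without_real_pts} shows $0$ versus $1$ components. So its sentence claiming the case-by-case comparison ``shows that we have again a bijection'' is wrong precisely there. The restriction ``$\gcd(r,d)\neq 1$'' in the introduction is wrong for the same reason.

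So do not hedge, and drop the suggestion that the two sides could be ``reconciled by checking that every real type actually occurs'': in that case there are no types at all. What your argument proves, and what you should state, is the following. For $\gcd(r,d)=1$, every fibre of $c_{r,d}$ is connected (possibly empty), and $\det$ induces an injection $\pi_0(\R\mathcal{N}(r,d))\hookrightarrow\pi_0(\R\Pic_d)$. This injection is a bijection except when $\R X=\emptyset$, $g$ is even, $r$ is even and $d$ is odd, where the source is empty and the target is a point. That corrected statement is all that the base case of the induction in Theorem \ref{mainresult} uses.
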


\begin{proof}
    Since $\gcd(r,d) = 1$, Decomposition \eqref{decomp_real_locus_coprime_case} holds. By Theorem \ref{Lag_quot_connected}, each Lagrangian quotient in this union is non-empty and connected. So the connected components of $\R\mathcal{N}(r,d)$ are indexed by the topological types of real and quaternionic vector bundles, which, by the results of \cite{BHH}, are given as follows:
    \begin{itemize}
        \item If $\R X \not= \emptyset$, real vector bundles $(E_1,\tau_1)$ and $(E_2,\tau_2)$ are topologically isomorphic if and only if they have the same rank and degree and the vector bundles $E_1^{\tau_1}$ and $E_2^{\tau_2}$ over $\R X$ have the same first Stiefel-Whitney class $w_1(E_i^{\tau_i}) = (s_1, \dots , s_n)$, where $n$ is the number of connected components of $\R X$. Similarly, quaternionic vector bundles $(E_1,\tau_1)$ and $(E_2,\tau_2)$ are topologically isomorphic if and only if they have the same rank and degree.
        \item If $\R X = \emptyset$, then real or quaternionic vector bundles are topologically isomorphic if and only if they have the same rank and degree.
    \end{itemize}
    There only remains to check that the determinant morphism induces a bijection $\pi_0(\R\mathcal{N}(r,d)) \simeq \pi_0(\R\Pic_d)$. Assume first that $\R X \neq \emptyset$. Since $\gcd(r,d) = 1$ by assumption, there can be no quaternionic vector bundles in this case (see Proposition \ref{propcriteriaexist}). For the same reason, there are no quaternionic line bundles in this case. So, comparing the topological classification of real vector bundles recalled above with the results of Gross and Harris recalled in Section \ref{picardwithrealpoints}, and using the fact that $\deg(E) := \deg(\det E)$ and $w_1(E^\tau) := w_1(\det E^\tau)$, we get indeed a bijection $\pi_0(\R\mathcal{N}(r,d)) \simeq \pi_0(\R\Pic_d)$.
    
    Assume now that $\R X = \emptyset$. Then, again by Proposition \ref{propcriteriaexist}, real vector bundles must satisfy $d \equiv 0\ \mathrm{mod}\ 2$, while quaternionic vector bundles must satisfy $d + r(g-1) \equiv 0\ \mathrm{mod}\ 2$. A case by case comparison with the results of Gross and Harris recalled in Section \ref{picardwithoutrealpoints} then shows that we have again a bijection $\pi_0(\R\mathcal{N}(r,d)) \simeq \pi_0(\R\Pic_d)$. In particular, when $\R X = \emptyset$ and $\gcd(r,d) = 1$, we have:
    \begin{itemize}
        \item $\R\mathcal{N}(2r+1,2d) = \mathcal{N}^{st,\R}(2r+1,2d)$ if $g$ is even.
        \item $\R\mathcal{N}(2r+1,2d) = \mathcal{N}^{st,\R}(2r+1,2d) \sqcup \mathcal{N}^{st,\H}(2r+1,2d)$ if $g$ is odd.
        \item $\R\mathcal{N}(2r+1,2d+1) = \mathcal{N}^{st,\H}(2r'+1,2d'+1)$ if $g$ is even.
        \item $\R\mathcal{N}(2r+1,2d+1) = \emptyset$ if $g$ is odd.
        \item $\R\mathcal{N}(2r, 2d+1) = \emptyset$.
    \end{itemize}
    This is consistent with the results of \cite[Theorem~1.1]{Sch_JSG}.
\end{proof}

Note that, given a real or quaternionic structure $\tau$ on a smooth Hermitian vector bundle $(E,h)$, the image of the map $f_\tau$ defined in \eqref{map_from_Lag_quot} is contained in $\mathbb{R}\mathcal{N}(r,d,s_\tau) = c_{r,d}^{-1}(s_\tau)$ where $s_\tau \in \pi_0(\mathbb{R}\mathrm{Pic}_d)$ is the connected component of $\mathbb{R}\mathrm{Pic}_d$ determined by the smooth Galois-invariant line bundle $(\det(E),\det(\tau))$, as follows from the definition of the map $c_{r,d} : \mathbb{R}\mathcal{N}(r,d) \to \mathbb{R} \mathrm{Pic}_d$ in \eqref{obstruction_map}. Thus, using the definition $\mathbb{R}\mathcal{N}(r,d,s) := c_{r,d}^{-1}(s)$ for all $s \in \pi_0(\R \mathrm{Pic}_d)$, we get the following commutative diagram, for all real or quaternionic structure $\tau$ on $(E,h)$. 
$$
\begin{tikzcd}
    \mathcal{A}_{YM}^{\ \tau} / \mathcal{G}_h^\tau \ar[r, "f_\tau"] \ar[rd] & \mathbb{R}\mathcal{N}(r,d) = \bigsqcup_{s \in \pi_0(\mathbb{R} \mathrm{Pic}_d) } \mathbb{R} \mathcal{N}(r,d,s) \\
    & \mathbb{R}\mathcal{N}(r,d,s_\tau) \ar[u, hook]
\end{tikzcd}
$$
The point is that it is not true in general that $s_\tau$ determines the topological type of $(E,\tau)$ as real or quaternionic vector bundle. More precisely, this holds in the coprime case but not in general. If for instance $(E,\tau)$ is a quaternionic vector bundle of rank $2r$, then $(\det E, \det\tau)$ will be a \textit{real} line bundle. So although we will prove that the open-closed decomposition \eqref{fibers-topologicaltype} determines the connected components of $\R\mathcal{N}(r,d)$, it might happen that $\R\mathcal{N}(r,d,s) = \emptyset$ for some particular $s \in \pi_0(\R\Pic_d)$, making the number of connected components $\R\mathcal{N}(r,d,s)$ smaller than $|\pi_0(\R\Pic_d)|$. We will for instance see that, when $\R X = \emptyset$ and $g$ is odd, then $\R\mathcal{N}(2r,2d)$ is connected (containing both real and quaternionic vector bundles), while $\R\Pic_{2d}$ has two connected components (one consisting of real line bundles and one consisting of quaternionic line bundles). This is consistent because, by the previous remark, $\H \not\in\Imm\, c_{2r,2d}$ in this case. We refer to Table \ref{CC_count_curves_without_real_pts} to see that the generalisation of Theorem \ref{CC_coprime_case} does not hold when $\R X = \emptyset$, and to Theorem \ref{CC_count_curves_with_real_pts} to see that it does hold when $\R X \not= \emptyset$.

\section{Connected components of the real locus}\label{topology_real_locus}

For all $r$ and $d$ and all $s \in \pi_0(\R \Pic_d X)$, consider the fibre $\R \mathcal{N}(r,d,s) := c_{r,d}^{-1}(s)$ of the map $c_{r,d}$ introduced in \eqref{obstruction_map}. In this section, we show that the fibre of $c_{r,d}$ are connected and prove our first main result below (see Theorem \ref{main-result-intro}).

\begin{teorema}
\label{mainresult}
For all integers $r\geq 1$ and $d\in \Z$, let $c_{r,d} : \R\mathcal{N}(r,d) \to \pi_0(\R\Pic_d)$ be the map defined in \eqref{obstruction_map}. Then, for all $s \in \pi_0(\R \Pic_d)$, the space $\R \mathcal{N}(r,d,s)$ is connected, so the decomposition of $\R \mathcal{N}(r,d)$ into connected components is given as follows:
$$
\R \mathcal{N}(r,d)= \bigsqcup_{s \in \Imm c_{r,d}}\R \mathcal{N}(r,d,s)
$$
\end{teorema}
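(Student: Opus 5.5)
The strategy is induction on $m=\gcd(r,d)$. The base case $m=1$ is Theorem \ref{CC_coprime_case}. In that case each $\R\mathcal{N}(r,d,s)$ is a single connected Lagrangian quotient $\mathcal{A}_{YM}^{\ \tau}/\mathcal{G}_h^\tau$ (Theorem \ref{Lag_quot_connected}).

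For general $m$, fix $s$ and write $\R\mathcal{N}(r,d,s)$ as a union of subsets of the following form:
$$
\R\mathcal{N}(r,d,s)=\bigcup_{[\tau]\,:\, s_\tau=s} \Imm f_\tau \;\cup\; \bigcup_{\text{decompositions}} \Imm \Phi ,
$$
Here $f_\tau$ is the map \eqref{map_from_Lag_quot}, and $\Phi$ ranges over direct-sum maps
$$
\Phi:\ \prod_i \R\mathcal{N}(s_ir',s_id',t_i)\times\prod_j \mathcal{N}(h_ir',h_id')\to \R\mathcal{N}(r,d),
$$
given by $(E_i,F_j)\mapsto \bigoplus E_i\oplus\bigoplus (F_j\oplus\sigma^*\ov{F_j})$. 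These are restricted to those tuples whose determinant component equals $s$. Proposition \ref{polystable-real}, together with the decomposition following it, shows that every polystable point lies in the image of some such $\Phi$. Each $\Phi$-image is connected: by induction (each $s_i<m$ when there are at least two summands) the factors $\R\mathcal{N}(s_ir',s_id',t_i)$ are connected, and $\mathcal{N}(h_jr',h_jd')$ is irreducible, hence connected. The images $\Imm f_\tau$ are connected by Theorem \ref{Lag_quot_connected}. So $\R\mathcal{N}(r,d,s)$ is a union of finitely many connected pieces.

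It then remains to show that these pieces are chained together by nonempty intersections. The natural hub is a ``maximally split'' locus: points $E_1\oplus\cdots\oplus E_m$ with each $E_i$ real of type $(r',d')$ (when such exist), or pairs $F\oplus\sigma^*\ov F$. I will show that every piece meets a piece of this form.

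First, any $\Phi$-image contains a point where all summands are further split into rank-$r'$ pieces. This uses induction again: the factor $\R\mathcal{N}(s_ir',s_id',t_i)$ is connected and contains split points. A summand $F\oplus\sigma^*\ov F$ with $F$ of rank $hr'$ can be degenerated inside $\mathcal{N}(hr',hd')$ to $F=G_1\oplus\cdots\oplus G_h$.

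Second, each $\Imm f_\tau$ contains polystable real or quaternionic points that split. Since $\mathcal{A}_{YM}^{\ \tau}$ contains connections that are direct sums of Galois-invariant Yang--Mills connections on smooth sub-bundles, choose a smooth $\tau$-invariant splitting of $(E,\tau)$ into rank-$r'$ real pieces and pairs. Its existence follows from the topological classification in \cite{BHH}. For quaternionic $\tau$ of even rank, split into pairs $\mathcal{F}\oplus\sigma^*\ov{\mathcal{F}}$ with quaternionic swap structure (Remark \ref{remarkrealstability}).

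Finally, all split points with the same determinant class $s$ must be connected to each other. Here I would use that the pair locus $\{F\oplus\sigma^*\ov F\}$, with $F\in\mathcal{N}(r',d')$ ranging over a connected space, contains both $E_1\oplus E_2$ type points (take $F$ real, so $F\simeq\sigma^*\ov F$) with both real and quaternionic summands where these exist. This lets one swap the Stiefel--Whitney data or the real/quaternionic type of two summands at a time while keeping the determinant component fixed.

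The main obstacle is this last bookkeeping. One must check that the combinatorial moves generated by pairs $F\oplus\sigma^*\ov F$, by moving $F$ through real and quaternionic points, act transitively on the split configurations with fixed $s$. This is where the parity facts enter: Proposition \ref{propcriteriaexist}, the Gross--Harris description, and Lemma \ref{SW_class_for_quat_bundles} (quaternionic bundles have trivial $w_1$ of the determinant). In the case $\R X\neq\emptyset$ this reduces to showing that multisets of classes $w_1\in(\Z/2)^n$ with fixed sum are connected under the moves $\{w,w'\}\to\{v,v'\}$ with $w+w'=v+v'$. These moves come from pairs whose $F$ degenerates to $E\oplus E'$ real.
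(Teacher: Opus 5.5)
Your setup matches the paper's: induction on $m$, and a covering of $\R\mathcal{N}(r,d,s)$ by the connected sets $\Imm f_\tau$ together with direct-sum images that are connected by the induction hypothesis. The gap is in the step that actually carries the induction, namely showing that these pieces are chained by non-empty intersections. You leave it as the ``main obstacle'', and the mechanism you propose for it does not work.

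\textbf{Why the pair mechanism fails.} If $F$ is real, then $F\oplus\sigma^*\ov F\cong F\oplus F$. So the pair loci over $\mathcal{N}(r',d')$ only produce moves that replace a \emph{repeated} summand, $\{w,w\}\to\{v,v\}$ or $\{w,w\}\to\{\H,\H\}$. Letting $F\in\mathcal{N}(2r',2d')$ degenerate to $E\oplus E'$ gives $E\oplus E'\oplus E\oplus E'$, which has four summands; it does not give the move $\{w,w'\}\to\{v,v'\}$ you need. Moves of this kind cannot, for instance (take $m=2$, $n\geq 3$), connect $E_1\oplus E_2$ to $E_3\oplus E_4$ when $w_1+w_2=w_3+w_4$ and the four classes are distinct. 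So the transitivity you reduce to is not obtained by your argument.

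\textbf{The missing observation.} A direct sum of real bundles whose classes add up to $s$ is itself a real bundle of the unique topological type $\tau_s$. Hence all such points lie in the single connected set $\Imm f_{\tau_s}$, and no combinatorics is needed for all-real configurations.

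When $\R X\neq\emptyset$, every direct-sum piece contains such a point: choose real points in each factor and a real $F$ in each pair factor, which contributes $0$ to $w_1$. In addition, $\Imm f_{\tau_\H}$ meets $\Imm f_{\tau_0}$ at $\mathcal{F}\oplus\sigma^*\ov{\mathcal{F}}$. This is what the paper's common points (e.g.\ $\mathcal{G}^{\oplus(m-1)}\oplus\mathcal{E}$, lying in every two-factor image and in $G_s(r,d)$) accomplish.

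When $\R X=\emptyset$, this shortcut is not available in general. There are real points of class $\H$ (for $g$ odd), such as $Q\oplus R$ with $Q$ quaternionic and $R$ real stable, that admit neither a real nor a quaternionic structure and so lie in no $\Imm f_\tau$. The two-factor pieces $\R\mathcal{N}(m_1r',m_1d',\H)\times\R\mathcal{N}(m_2r',m_2d',\R)$ containing them must be linked to $G_\H(r,d)$ and to each other by explicit common points, such as $\mathcal{E}\oplus\mathcal{G}^{\oplus(m-1)}$ and $\mathcal{E}^{\oplus m}$ with $\mathcal{E}$ quaternionic and $\mathcal{G}$ real of type $(r',d')$. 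This requires tracking the parity of $m_1$, and using that when $d'$ is odd only even multiplicities of stable real or quaternionic summands can occur. Your proposal only alludes to ``parity facts'' here, yet this case analysis is the actual content of the inductive step.
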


\begin{proof}

The proof goes by induction on $m := \gcd(r,d)$ and is divided into several cases. The general strategy consists in writing the real loci $\R \mathcal{N}(r,d,s)$ as the union of certain smaller connected sets and showing that these smaller connected sets are pairwise non-disjoint. The base case $m = 1$ is proven in Theorem \ref{CC_coprime_case}. We now focus on the inductive step, depending on the type of Klein surface we are dealing with. As in Section \ref{notations}, we put $r' \coloneqq r/m$ and $d' \coloneqq d/m$, which implies that $d'/r'=d/r$ and $\gcd(r',d')=1$. 

\begin{case}
$\R X= \emptyset$.    
\end{case}

Let $(E_\R,h, \tau_\R)$ be a $C^\infty$ Hermitian vector bundle of rank $r$ and degree $d$ with real structure and let $\mathcal{A}_{YM}^{\, \tau_\R}/\mathcal{G}_h^{\tau_\R}$ be the moduli space of semistable real vector bundles that are smoothly isomorphic to $(E_\R,\tau_\R)$. By Proposition \ref{propcriteriaexist} and Theorem \ref{Lag_quot_connected}, this is a connected space, which is non-empty if and only $d$ is even. We simply denote by $f_\R : \mathcal{A}_{YM}^{\, \tau_\R}/\mathcal{G}_h^{\tau_\R} \to \R\mathcal{N}(r,d)$ the map $f_{\tau_\R}$ introduced in \eqref{map_from_Lag_quot}. In particular, $\Imm\, f_\R$ is connected. Similarly, let $(E_\H,h, \tau_\H)$ be a $C^\infty$ Hermitian vector bundle of rank $r$ and degree $d$ with quaternionic structure and let $\mathcal{A}_{YM}^{\, \tau_\H}/\mathcal{G}_h^{\tau_\H}$ be the moduli space of semistable quaternionic vector bundles that are smoothly isomorphic to $(E_\H,\tau_\H)$. By Proposition \ref{propcriteriaexist} and Theorem \ref{Lag_quot_connected}, this is a connected space, which is non-empty if and only $d + r(g-1)$ is even. We simply denote by $f_\H : \mathcal{A}_{YM}^{\, \tau_\H}/\mathcal{G}_h^{\tau_\H} \to \R\mathcal{N}(r,d)$ the map $f_{\tau_\H}$ introduced in \eqref{map_from_Lag_quot}.
\begin{remark}
    By construction, a semistable vector bundle $E\in\R\mathcal{N}(r,d)$ belongs to $\Imm\, f_\R$ if and only if admits a real structure, and it belongs to $\Imm\, f_\H$ if and only if admits a quaternionic structure.
\end{remark}

\textbf{Subcase 1: $g$ even.} In this case, $\R \Pic_d$ is connected for every $d$ (see Section \ref{picardwithoutrealpoints}). We thus need to show that $\R \mathcal{N}(r,d)$ is connected for all $r$ and $d$. Let us set $G_{\R}(r,d) \coloneqq  \Imm\, f_{\R},$ $G_{\H}(r,d) \coloneqq  \Imm\, f_{\H}$, as well as $G(r,d)\coloneqq G_{\R}(r,d) \cup G_{\H}(r,d)$ and start by showing that $G(r,d)$ is connected. By Proposition \ref{propcriteriaexist} and Theorem \ref{Lag_quot_connected}, we have that  $G_{\R}(r,d)$ is nonempty and connected if and only if $d$ is even and $G_{\H}(r,d)$ is nonempty and connected if and only if $r+d$ is even. In particular, $G_{\R}(r,d)$ and $G_{\H}(r,d)$ are both non-empty if and only if $r$ and $d$ are both even, so this is the only case in which there is something to prove. In this case, consider $\mathcal{F} \in \mathcal{N}^{st}(\frac{r}{2},
\frac{d}{2})$. By Remark \ref{remarkrealstability}, the bundle $\mathcal{E}\coloneqq \mathcal{F} \oplus \sigma^*(\overline{\mathcal{F}})$ is polystable and can be endowed with both a real and quaternionic structure. In particular, $\mathcal{E} \in  \Imm\, f_{\R} \cap \Imm\, f_{\H}$. This shows that, when both these sets are non-empty, they have non-empty intersection. Since $ \Imm\, f_{\R}$ and $ \Imm\, f_{\H}$ are connected, we deduce from the previous observation that $G(r,d)$ is connected for all $r$ and $d$. We will deduce from this that $\R \mathcal{N}(r,d)$ is also connected.

\smallskip

To prove it, observe first that, for every $(m_1,m_2) \in \N_{>0}^2$ such that $m_1+m_2=m$, we have a continuous map $$
\begin{array}{rcl}
f_{m_1,m_2}:\R \mathcal{N}(m_1r',m_1 d') \times \R \mathcal{N}(m_2 r',m_2d') & \longrightarrow & \R \mathcal{N}(r,d)\\
(\mathcal{E}_1,\mathcal{E}_2) & \longmapsto & \mathcal{E}_1 \oplus \mathcal{E}_2
\end{array}
$$
By Proposition \ref{polystable-real}, we have 
\begin{equation}\label{union1}
\R\mathcal{N}(r,d) \quad = \quad G(r,d)\quad \cup \bigcup_{\substack{(m_1,m_2) \in \N_{>0}^2 \\  m_1+m_2=m}} \Imm\, f_{m_1,m_2}.
\end{equation}
To show that $\R\mathcal{N}(r,d)$ is connected, it is thus enough to show that $$\bigcup_{\substack{(m_1,m_2) \in \N_{>0}^2 \\  m_1+m_2=m}} \Imm\, f_{m_1,m_2}$$ is connected and has non-empty intersection with $G(r,d)$. Before showing this, notice that, 
by Proposition \ref{propcriteriaexist} and Theorem \ref{Lag_quot_connected}, we have that $\R \mathcal{N}(r',d') \neq \emptyset$ if and only if $d'$ is even (in which case $r'$ is odd) or $r'+d'$ is even (in which case $r'$ and $d'$ are both odd). We thus have three situations to discuss.

\begin{enumerate}

\item If $d'$ is even (so $r'$ is odd, since $\gcd(r',d') = 1$), then the space $\R \mathcal{N}(r',d')$ is non-empty and contains only real bundles. Let $\mathcal{E} \in \R\mathcal{N}(r',d')$ be a (geometrically) stable one. The element $\mathcal{E}^{\oplus m}$ belongs to $\Imm\, f_{m_1,m_2}$ for every pair $(m_1,m_2) \in \N_{>0}^2 $such that $ m_1+m_2=m$. Since $m_i = \gcd(m_i r', m_i d') < m$, we have, by induction, that $\R\mathcal{N}(m_i r', m_i d')$ is connected for $i = 1, 2$. Therefore, the space $\Imm\, f_{m_1,m_2}$ is connected for all pairs $(m_1, m_2)$ such that $m_1+m_2=m$. Since every pairwise intersection of such spaces contains $\mathcal{E}^{\oplus m}$, this implies that  $$\bigcup_{\substack{(m_1,m_2) \in \N_{>0}^2 \\  m_1+m_2=m}} \Imm\, f_{m_1,m_2}$$ is connected. Moreover, since the vector bundle $\mathcal{E}^{\oplus m}$ can be endowed with a real structure, it also belongs to $G(r,d)$. From Equation (\ref{union1}), we deduce that $\R \mathcal{N}(r,d)$ is connected.

\item If $d'$ is odd and $r'$ is odd, then the space $\R \mathcal{N}(r',d')$ is non-empty and contains only quaternionic bundles. Let  $\mathcal{E} \in \R\mathcal{N}(r',d')$ be a (geometrically) stable one. Reasoning as above, we show that $\R \mathcal{N}(r,d)$ is connected in this case too.

\item If $d'$ is odd and $r'$ is even, then, as already noticed, we have $\R \mathcal{N}(r',d')=\emptyset$. This implies that, for every $p \in \N_{>0}$, the variety $\R \mathcal{N}(pr',pd')$ is non-empty if and only if $p$ is even. Indeed, consider $\mathcal{E} \in \R \mathcal{N}(pr',pd')$ and, using Proposition \ref{polystable-real}, write $$\mathcal{E}=\mathcal{E}_1 \oplus \cdots \oplus \mathcal{E}_k \oplus \big(\mathcal{F}_1 \oplus \sigma^*(\overline{\mathcal{F}}_1)\big) \oplus \cdots \oplus (\mathcal{F}_1 \oplus \sigma^*(\overline{\mathcal{F}}_l))$$ with $\mathcal{E}_i \in \R \mathcal{N}^{st}(s_ir',s_id')$ and $\mathcal{F}_i \in \mathcal{N}^{st}(h_ir',h_id')$ for some $s_1,\dots,s_k,h_1,\dots,h_l$. In particular, $\mathcal{F}_i \oplus \sigma^*(\overline{\mathcal{F}}_i) \in \mathcal{N}(2h_ir',2h_id')$ for every $i$ and $s_1+ \cdots +s_k+2h_1+\cdots +2h_l=p .$ By Proposition \ref{propcriteriaexist}, we have $\R \mathcal{N}^{st}(sr',sd')=\emptyset$ for every odd $s$, since $d'$ is odd and $r'$ is even. So every $s_i$ must be even and therefore also $p$. In particular, the number $m$ is also even. Write then $m=2m'$ and 
$$
\bigcup_{\substack{(m_1,m_2) \in \N_{>0}^2 \\  m_1+m_2=m}} \Imm\, f_{m_1,m_2} \quad = \bigcup_{\substack{(m'_1,m'_2) \in \N_{>0}^2 \\  m'_1+m'_2=m'}} \Imm\, f_{2m'_1,2m'_2}.
$$
Consider now $\mathcal{F} \in \mathcal{N}(r',d')$ and note that the element $(\mathcal{F} \oplus \sigma^*(\overline{\mathcal{F}}))^{\oplus m'}$ belongs to $\Imm\, f_{2m'_1,2m'_2}$ for every pair $(m'_1,m'_2)$ such that $m_1'+m_2'=m'$. This implies that $$\bigcup_{\substack{(m'_1,m'_2) \in \N_{>0}^2 \\  m'_1+m'_2=m}} \Imm\, f_{2m'_1,2m'_2}$$ is connected. Moreover, since the vector bundle $(\mathcal{F} \oplus \sigma^*(\overline{\mathcal{F}}))^{\oplus m'} $ can be endowed with a real structure, it also belongs to $G(r,d)$. From Equation \eqref{union1}, we then deduce that $\R \mathcal{N}(r,d)$ is connected.
\end{enumerate}

\textbf{Subcase 2: $g$ odd.} In this case, $\R \Pic_d$ is empty if $d$ is odd and $\pi_0(\R \Pic_d)=\{\R,\mathbb{H}\}$ if $d$ is even (see Section \ref{picardwithoutrealpoints}). In particular, $\R \mathcal{N}(r,d)=\emptyset$ if $d$ is odd. We can thus assume that $d$ is even and show that in this case $\R \mathcal{N}(r,d,\R)$ and $\R \mathcal{N}(r,d,\mathbb{H})$ are both connected. As in the first subcase, let us set $G_{\R}(r,d) \coloneqq  \Imm\, f_{\R}$ and $G_{\H}(r,d) \coloneqq  \Imm\, f_{\H}$. We then distinguish two cases. 
\begin{enumerate}
\item If $r$ is even, then by Remark \ref{remark-determinants}, we have 
$
G_{\R}(r,d) \cup G_{\H}(r,d) \subseteq \R \mathcal{N}(r,d,\R) := c_{r,d}^{-1}(\R).
$
In this case, we proceed as in the case when $g$ is even and define $G(r,d) := G_\R(r,d)\cup G_\H(r,d).$ Then, given $\mathcal{F} \in \mathcal{N}(\frac{r}{2},\frac{d}{2})$, the vector bundle $\mathcal{E} \coloneqq \mathcal{F} \oplus \sigma^*(\overline{\mathcal{F}})$ can be endowed with both a real and quaternionic structure, and from that we can deduce as in the first subcase that $G(r,d)$ is connected if $r$ is even. We will show below that this implies that $\R\mathcal{N}(r,d,\R)$ is connected when $r$ is even.
\item If $r$ is odd, then by Remark \ref{remark-determinants}, we have 
$
G_{\R}(r,d) \subseteq \R \mathcal{N}(r,d,\R) := c_{r,d}^{-1}(\R)$ and $G_{\mathbb{H}}(r,d) \subseteq \R \mathcal{N}(r,d,s_\mathbb{H}) := c_{r,d}^{-1}(\H)
$
with $G_{\R}(r,d)$ and $G_{\mathbb{H}}(r,d)$ both connected, and we will show below that this implies that $\R \mathcal{N}(r,d,\R)$ and $\R \mathcal{N}(r,d,\H)$ are connected when $r$ is odd, too.
\end{enumerate}
Let us first show that $\R \mathcal{N}(r,d,\R)$ is connected, for all $r$. For every pair $(m_1,m_2)$ such that $m_1+m_2=m$, we have a continuous map 
$$
\begin{array}{rcl}
f_{\R, m_1,m_2} : \R \mathcal{N}(m_1r',m_1d',\R) \times \R \mathcal{N}(m_2r',m_2d',\R) & \longmapsto & \R \mathcal{N}(r,d,\R) \\
(\mathcal{E}_1,\mathcal{E}_2) & \longmapsto & \mathcal{E}_1 \oplus \mathcal{E}_2 .
\end{array}
$$
\begin{lemma}\label{refine_decomp_g_odd}
If $r$ is even, 
\begin{equation*}
\R \mathcal{N}(r,d,\R) \quad = \quad G(r,d) \quad \cup \bigcup_{\substack{(m_1,m_2) \in \N_{>0}^2 \\ m_1+m_2=m}}\Imm\, f_{\R, m_1,m_2}   
\end{equation*}
and, if $r$ is odd, 
\begin{equation*}
\R \mathcal{N}(r,d,\R) \quad = \quad G_{\R}(r,d) \quad \cup \bigcup_{\substack{(m_1,m_2) \in \N_{>0}^2 \\ m_1+m_2=m}}\Imm\, f_{\R, m_1,m_2} \ .
\end{equation*}
\end{lemma}
\begin{proof}
Consider $\mathcal{E} \in \R \mathcal{N}(r,d,\R)$ such that, when $r$ is even, $\mathcal{E} \notin G(r,d)$ and, when $r$ is odd, $\mathcal{E} \notin G_{\R}(r,d) $. Using Propositions \ref{realquaternionicstability} and \ref{polystable-real}, write $$\mathcal{E}=\mathcal{E}_1 \oplus \cdots \oplus \mathcal{E}_k \oplus \big(\mathcal{F}_1 \oplus \sigma^*(\overline{\mathcal{F}}_1)\big) \oplus \cdots \oplus (\mathcal{F}_1 \oplus \sigma^*(\overline{\mathcal{F}}_l))$$ 
for some $\mathcal{E}_i \in \R \mathcal{N}^{st}(s_ir',s_id')$ and $\mathcal{F}_i \in \mathcal{N}^{st}(h_ir',h_id')$ and some integers $s_1,\dots,s_k,h_1,\dots,h_l$. If $\mathcal{E}$ does not belong to $G_{\R}(r,d)$ or $G(r,d)$, then there is a pair $(i, j)$ with $i\not= j$ such that $\mathcal{E}_i$ is real and $\mathcal{E}_j$ is quaternionic. Without loss of generality, we can assume that $\mathcal{E}_i \in \R \mathcal{N}^{st,\H}(s_ir',s_id')$ for $i=1,\dots,q$ and $\mathcal{E}_j \in \R \mathcal{N}^{st,\R}(s_j r',s_j d')$ for $j=q+1,\dots,k$, for some $q$ such that $1 \leq q <k$. Since $c_{r,d}(\mathcal{E})=\R$ and, for all $j = q + 1 , \dots, k$, $\det(\mathcal{E}_j)$ is a real line bundle, the line bundle $\det(\mathcal{E}_1 \oplus \cdots \oplus \mathcal{E}_q)$ must be real. Hence $\mathcal{E}_1 \oplus \cdots \oplus \mathcal{E}_q \in \R \mathcal{N}\big((s_1+\cdots +s_q)r',(s_1+\cdots +s_q)d',\R\big) $ and $\mathcal{E}_{q+1} \oplus \cdots \oplus \mathcal{E}_k\oplus  \big(\mathcal{F}_l \oplus \sigma^*(\overline{\mathcal{F}_l})\big) \in \R \mathcal{N}\big((m-s_q-\cdots -s_1)r',(m-s_q-\cdots -s_1)d', \R\big) .$ 
We thus have $\mathcal{E}=f_{\R, (s_1+\cdots +s_q),m-(s_1+\cdots +s_q)}\big(\mathcal{E}_1 \oplus \cdots \oplus \mathcal{E}_q,\mathcal{E}_{q+1} \oplus \cdots \oplus  \big(\mathcal{F}_l \oplus \sigma^*(\overline{\mathcal{F}_l})\big)\big)$ and $\mathcal{E} \in \Imm\, f_{\R, (s_1+\cdots +s_q),m-(s_1+\cdots +s_q)}$.
\end{proof}

\noindent To conclude the proof that $\R\mathcal{N}(r,d,\R)$ is connected, we distinguish two sub-cases. 
\begin{enumerate}
    \item Assume first that $d'$ is even. Then the variety $\R \mathcal{N}(r',d')$ is non-empty and contains a real bundle $\mathcal{E} \in \R\mathcal{N}(r',d',\R)$. The element $\mathcal{E}^{\oplus m}$ belongs belongs to $\Imm\, f_{\R,m_1,m_2}$ for every $(m_1,m_2) \in \N_{>0}^2$ such that $ m_1+m_2=m$. By the induction hypothesis, each $\Imm\, f_{m_1,m_2}$ is connected. This implies that $$\bigcup_{\substack{(m_1,m_2) \in \N_{>0}^2 \\  m_1+m_2=m}} \Imm\, f_{\R, m_1,m_2}$$ is connected. Moreover, since the vector bundle $\mathcal{E}^{\oplus m}$ can be endowed with a real structure, it belongs to $G_{\R}(r,d)$. We deduce that $\R \mathcal{N}(r,d,\R)$ is connected in this case.
    \item Assume next that $d'$ is odd. Since $d= md'$ and $d$ is even, we have that $m$ is even, too. Write then $m=2m'$. Using a similar argument to the one used when $\R X = \emptyset$ and $g$ even, we can argue that $\R \mathcal{N}(sr',sd')$ is empty when $s$ is odd, hence that
    $$
    \bigcup_{\substack{(m_1,m_2) \in \N_{>0}^2 \\  m_1+m_2=m}} \Imm\, f_{\R, m_1,m_2}\qquad = \quad \bigcup_{\substack{(m'_1,m'_2) \in \N_{>0}^2 \\  m'_1+m'_2=m}} \Imm\, f_{\R, 2m'_1,2m'_2}\ .
    $$
    Consider now $\mathcal{F} \in \mathcal{N}(r',d')$. Notice that the element 
    $(\mathcal{F} \oplus \sigma^*(\overline{\mathcal{F}}))^{\oplus m'} $ belongs to 
    $\Imm\, f_{\R, 2m'_1,2m'_2}$ for every $m'_1,m'_2$ such that $m_1'+m_2'=m'$. This 
    implies that 
    $$
    \bigcup_{\substack{(m'_1,m'_2) \in \N_{>0}^2 \\  m'_1+m'_2=m}} \Imm\, f_{\R, 
    2m'_1,2m'_2}
    $$ is connected. Moreover, since the vector bundle $(\mathcal{F} \oplus \sigma^*(\overline{\mathcal{F}}))^{\oplus m'} $ can be endowed with a real structure, it belongs to $G_{\R}(r,d)$ too. We deduce that $\R \mathcal{N}(r,d,\R)$ is connected in this case too.
\end{enumerate}

Let us now show that $\R \mathcal{N}(r,d,\mathbb{H})$ is connected.  For every pair $(m_1,m_2)$ such that $m_1+m_2=m$, we have a morphism 
$$
\begin{array}{rcl}
f_{\H,m_1,m_2} : \R \mathcal{N}(m_1r',m_1d',s_\mathbb{H}) \times \R \mathcal{N}(m_2r',m_2d',\R) & \longrightarrow & \R \mathcal{N}(r,d,s_\mathbb{H}) \\
(\mathcal{E}_1,\mathcal{E}_2) & \longmapsto & \mathcal{E}_1 \oplus \mathcal{E}_2 
\end{array}.
$$ As in Lemma \ref{refine_decomp_g_odd}, we have, if $r$ is even,
\begin{equation*}
\R \mathcal{N}(r,d,\mathbb{H}) \quad = \quad \bigcup_{\substack{(m_1,m_2) \in \N_{>0}^2 \\ m_1+m_2=m}}\Imm\, f_{\H, m_1,m_2} 
\end{equation*}
and, if $r$ is odd, 
\begin{equation*}
\R \mathcal{N}(r,d,\mathbb{H}) \quad = \quad G_{\mathbb{H}}(r,d) \quad \cup \bigcup_{\substack{(m_1,m_2) \in \N_{>0}^2 \\ m_1+m_2=m}}\Imm\, f_{\H, m_1,m_2}\ .
\end{equation*}
We treat each case separately. 
\begin{enumerate}
    \item Assume that $r$ is odd. Then, as $d$ is even, $m := \gcd(r,d)$ must be odd and, as $d = md'$, $d'$ must be odd. Let $m=2h+1$. In particular, for every pair $(m_1,m_2)$ such that $m_1+m_2=m$, either $m_1$ is odd and $m_2$ or even or vice-versa. Assume for instance that $m_1$ is odd with $m_1=2h_1+1$ and $m_2=2h_2$. Fix a real vector bundle $\mathcal{G} \in \R \mathcal{N}(r',d',\R)$ and a quaternionic vector bundle $\mathcal{E} \in \R \mathcal{N}(r',d',s_\mathbb{H})$. For the polystable vector bundle $\mathcal{W} \coloneqq \mathcal{G}^{\oplus 2h} \oplus \mathcal{E}$, we have $\mathcal{W}=f_{\H, m_1,m_2}\big(\mathcal{G}^{2h_1} \oplus \mathcal{E},\mathcal{G}^{2h_2}\big)$ and $\mathcal{W}=f_{\H, m_2,m_1}\big(\mathcal{G}^{2h_2-1} \oplus \mathcal{E},\mathcal{G}^{2h_1+1}\big)\ .$ In particular, $\Imm\, f_{\H, m_1,m_2} \cap \Imm\, f_{\H,m_2,m_1} \neq \emptyset$ and thus, by induction, for all $m_1,m_2$ such that $m_1+m_2=m$, we have that $\Imm\, f_{\H, m_1,m_2} \cup \Imm\, f_{\H,m_2,m_1}$ is connected. Notice that $$\bigcup_{\substack{(m_1,m_2) \in \N_{>0}^2 \\ m_1+m_2=m}}\Imm\, f_{\H, m_1,m_2} \quad = \bigcup_{\substack{(m_1,m_2) \in \N_{>0}^2 \\ m_1+m_2=m\\m_1 \text{ is odd}}} \Imm\, f_{\H, m_1,m_2}  \cup \Imm\, f_{\H,m_2,m_1} .$$
    For every pair $(m_1,m_2)$ such that $m_1+m_2=m$ and $m_1$ is odd, we have that $\mathcal{E}^{\oplus m} = f_{\H, m_1,m_2}(\mathcal{E}^{\oplus m_1},\mathcal{E}^{\oplus m_2})$. Since each $\Imm\, f_{\H, m_1,m_2}  \cup \Imm\, f_{\H,m_2,m_1}$ with $m_1$ odd is connected, we deduce that $$\bigcup_{\substack{(m_1,m_2) \in \N_{>0}^2 \\ m_1+m_2=m}}\Imm\, f_{\H, m_1,m_2}$$ is also connected. The bundle $\mathcal{E}^{\oplus m}$ can be endowed with a quaternionic structure, thus it belongs to $G_{\mathbb{H}}(r,d)$. We deduce from this that $\R \mathcal{N}(r,d,\mathbb{H})$ is connected.
    \item Assume now that $r$ is even. We distinguish two sub-cases. 
    \begin{itemize}
        \item Assume first that $d'$ is odd. Then, as shown above, $\R \mathcal{N}(sr',sd')=\emptyset$ if $s$ is odd. In particular, for all $\mathcal{E} \in \R \mathcal{N}(r,d)$, by Propositions \ref{realquaternionicstability} and \ref{polystable-real}, we can write $$\mathcal{E}=\mathcal{E}_1 \oplus \cdots \oplus \mathcal{E}_k \oplus (\mathcal{F}_1 \oplus \sigma^*(\overline{\mathcal{F}}_1)) \oplus \cdots \oplus (\mathcal{F}_1 \oplus \sigma^*(\overline{\mathcal{F}}_l))$$ for some geometrically stable bundles $\mathcal{E}_i \in \R \mathcal{N}^{st}(s_ir',s_id')$ and $\mathcal{F}_i \in \mathcal{N}^{st}(h_ir',h_id')$ and some integers $s_1,\dots,s_k,h_1,\dots,h_l$. We must therefore have that $s_1,\dots,s_k$ are even. This implies that $\mathcal{E}_i \in \R \mathcal{N}(s_ir',s_id',\R)$ for each $i$ and thus $\mathcal{E} \in \R \mathcal{N}(r,d,\R)$ too, i.e.\ $\R \mathcal{N}(r,d,s_\mathbb{H})= \emptyset$ in this case (in particular, it is connected). Note that the property $c_{r,d}^{-1}(\H) = \emptyset$ does not contradict the fact, already seen, that there quaternionic line bundles of rank $r$ and degree $d$ in the case at hand, including geometrically stable ones. It just so happens that, as $r$ is even, the determinant of such a bundle is a real line bundle, so quaternionic vector bundles are contained in the fibre $c_{r,d}^{-1}(\R)$ in this case.
        \item Assume now that $d'$ is even. If $m$ is odd, we can argue as above to deduce the connectedness of $$ \bigcup_{\substack{(m_1,m_2) \in \N_{>0}^2 \\ m_1+m_2=m}}\Imm\, f_{\H, m_1,m_2} .$$ So we assume from now on that $m$ is even, with $m=2h$. By Propositions \ref{realquaternionicstability} and \ref{polystable-real}, we can write $\mathcal{E}=\mathcal{E}_1 \oplus \cdots \oplus \mathcal{E}_k \oplus \big(\mathcal{F}_1 \oplus \sigma^*(\overline{\mathcal{F}}_1)\big) \oplus \cdots \oplus (\mathcal{F}_1 \oplus \sigma^*(\overline{\mathcal{F}}_l))$ for some geometrically stable bundles $\mathcal{E}_i \in \R \mathcal{N}^{st}(s_ir',s_id')$ and $\mathcal{F}_i \in \mathcal{N}^{st}(h_ir',h_id')$ and some integers $s_1,\dots,s_k,h_1,\dots,h_l$. We can moreover assume without loss of generality that $\mathcal{E}_i\in \R \mathcal{N}^{st}(s_ir',s_id',\mathbb{H})$ if $i=1,\dots,q$ and $\mathcal{E}_i \in \R \mathcal{N}(s_ir',s_id',\R)$ if $i=q+1,\dots,k$. As remarked above, this implies that $s_i$ is odd for each $i=1,\dots,q$ and that $q$ is odd. We deduce from it that $$\R \mathcal{N}(r,d,\mathbb{H}) \quad= \bigcup_{\substack{(m_1,m_2) \in \N_{>0}^2 \\ m_1+m_2=m}}\Imm\, f_{\H, m_1,m_2} \quad =\bigcup_{\substack{(m_1,m_2) \in \N_{>0}^2 \\ m_1+m_2=m \\ m_1,m_2\ \text{both odd}}}\Imm\, f_{\H, m_1,m_2}\ .$$ For every pair $(m_1,m_2)$ such that $m_1+m_2=m$ and $m_1,m_2$ both odd, let $m_1=2h_1+1$ and $m_2=2h_2-1$, i.e.\ $h_1+h_2=h$. Fix $\mathcal{G} \in \R \mathcal{N}(r',d',\R)$ and $\mathcal{E} \in \R \mathcal{N}(r',d',\mathbb{H})$. Let $\mathcal{W} \coloneqq \mathcal{E} \oplus \mathcal{G}^{\oplus 2h-1} .$ For each $m_1,m_2$ as above, we have $\mathcal{W}=f_{\H, m_1,m_2}(\mathcal{E} \oplus \mathcal{G}^{\oplus 2h_1},\mathcal{G}^{2h_2-1}) .$ We deduce that 
        $$\bigcap_{\substack{(m_1,m_2) \in \N_{>0}^2 \\ m_1+m_2=m \\ m_1,m_2\ \text{both odd}}}\Imm\, f_{\H, m_1,m_2} \neq \emptyset$$ so, by induction, $\R \mathcal{N}(r,d,s_\mathbb{H})$ is connected.
    \end{itemize}
\end{enumerate}

\begin{case}
$\R X \neq \emptyset$ (i.e.\ $n >0$).   
\end{case}

Let $r \geq 1$ and let $d\in \Z$. By section \ref{picardwithrealpoints}, the $2^{n-1}$ connected components of the variety $\R \Pic_d$ are indexed by the elements $s=(s_1,\dots,s_n) \in (\Z/2\Z)^{n}$ such that $s_1+\cdots +s_n \equiv d \ \mathrm{mod}\ 2$. Given an $s \in (\Z/2\Z)^n$ satisfying that condition, Proposition \ref{propcriteriaexist} shows that there exists a $C^\infty$ real vector bundle $(E,\tau_s)$, unique up to isomorphism of such bundles, such that $w_1(E^{\tau_s}) = s$. We denote by $\mathcal{A}_{YM}^{\, \tau_s}/\mathcal{G}_h^{\tau_s}$ the moduli space of semistable real vector bundles that are smoothly isomorphic to $(E_\R,\tau_s)$. By Proposition \ref{propcriteriaexist} and Theorem \ref{Lag_quot_connected}, this is a non-empty and connected space. We simply denote by $f_{\R,s} : \mathcal{A}_{YM}^{\, \tau_s}/\mathcal{G}_h^{\tau_s} \to \R\mathcal{N}(r,d)$ the map $f_{\tau_s}$ introduced in \eqref{map_from_Lag_quot}. We then denote by $G_{\R,s}(r,d)$ the connected subspace
$
G_{\R,s}(r,d) \coloneqq \Imm\, f_{\R,s} \subseteq \R \mathcal{N}(r,d,s)\, .
$ 
When $\R X \neq \emptyset$, Proposition \ref{propcriteriaexist} shows that there exists a $C^\infty$ Hermitian vector bundle $(E, \tau_H)$ of rank $r$ and degree $d$ if and only if $r$ and $d$ are both even, and that such a bundle is unique up to isomorphism. In that case, Theorem \ref{Lag_quot_connected} shows that the moduli space $\mathcal{A}_{YM}^{\, \tau_s}/\mathcal{G}_h^{\tau_s}$ of semistable quaternionic vector bundles that are smoothly isomorphic to $(E,\tau_\H)$ is non-empty and connected and we shall simply denote by $f_{\H} : \mathcal{A}_{YM}^{\, \tau_\H}/\mathcal{G}_h^{\tau_\H} \to \R\mathcal{N}(r,d)$ the map $f_{\tau_\H}$ introduced in \eqref{map_from_Lag_quot}. The next observation is extracted from the proof of \cite[Theorem~2.5]{Sch-Sco}.
\begin{lemma}\label{SW_class_for_quat_bundles}
Let $(E,\tau_\H)$ be a quaternionic line bundle of rank $2r$ and degree $2d$ on a real curve $X$ such that $\R X \not=\emptyset$. Then the determinant bundle $\det E$, which is a real line bundle, satisfies $w_1(\R E) = (0, \dots , 0) \in (\Z/2\Z)^n$.
\end{lemma}
\begin{proof}
    Observe that $\det (\R E) = \R \det E = \R \det(E|_{\R X})$ and that $E_{\R X}$ is a complex vector bundle with quaternionic structure over a real space with trivial real structure in the sense of Atiyah \cite{Atiyah_real}. So $E|_{\R X}$ admits a reduction of structure group to $\mathbf{Sp}(2r,\C) \subset \mathbf{SL}(2r,\C)$ and this reduction is compatible with its real structure. The line bundle $\det(E|_{\R X})$ is therefore trivial as a complex line bundle with real structure, which implies that $w_1(\R E) = (0, \dots, 0)\in(\Z/2\Z)^n$.
\end{proof}
By Lemma \ref{SW_class_for_quat_bundles}, if we denote by $G_{\H}(2r,2d)$ the connected subspace $\Imm\, f_\H \subset \R\mathcal{N}(2r,2d)$, we have an inclusion
\begin{equation}\label{connected_chunks}
G_{\mathbb{H}}(2r,2d) \coloneqq  \Imm\, f_\H \subseteq \R \mathcal{N}(2r,2d,0) := c_{2r,2d}^{-1}(0)\, .
\end{equation}
Fix now $s \in (\Z/2\Z)^n$ such that $s_1+\cdots +s_n \equiv d \ \mathrm{mod}\ 2$ and put $G_s(r,d) \coloneqq G_{\R,s}(r,d) \cup G_{\mathbb{H},s}(r,d) ,$ where $G_{\H,0}(r,d) := G_\H(r,d)$ if $r$ and $d$ are both even, and $G_{\mathbb{H},s}(r,d)\coloneqq \emptyset$ if $s \neq 0$ or $r$ and $d$ not both even. To show that $\R\mathcal{N}(r,d,s)$ is connected, we first show that $G_s(r,d)$ is connected. Since the spaces $\Imm\, f_{\R,s}$ and $\Imm\, f_{\H}$ are all connected, the only case in which there is something to prove is when $r$ and $d$ are both even, and in that case it suffices to show that $G_{\R,0}(r,d) \cap G_{\mathbb{H}}(r,d) \neq \emptyset$. Consider then $\mathcal{F} \in \mathcal{N}^{st}(\frac{r}{2},\frac{d}{2})$. Since the bundle $\mathcal{E} \coloneqq \mathcal{F} \oplus \sigma^*(\overline{\mathcal{F}}) $ is polystable and can be endowed with both a real and quaternionic structure, it belongs to $G_{\R,0}(r,d) \cap G_{\mathbb{H}}(r,d)$ and this concludes the proof that $G_0(r,d) = G_{\R,0}(r,d) \cap G_{\mathbb{H}}(r,d)$ is connected. We will deduce from this that $\R\mathcal{N}(r,d,s)$ is also connected. But first observe that, for all $r$ and $d$ and for every pair $(m_1,m_2) \in \N_{>0}$ such that $m_1+m_2=m$ and $m_2 d'\equiv d \ \mathrm{mod}\ 2$, there is a continuous map 
$$
\begin{array}{rcl}
f_{s, m_1,m_2} : \R\mathcal{N}(m_1r',m_1d',0) \times\R\mathcal{N}(m_2r',m_2d',s) & \longrightarrow & \R \mathcal{N}(r,d,s)\\
(\mathcal{E}_1,\mathcal{E}_2) & \longmapsto & \mathcal{E}_1 \oplus\mathcal{E}_2 .
\end{array}
$$
Consider now an element $\mathcal{E} \in \R \mathcal{N}(r,d,s)$. By Proposition \ref{polystable-real}, we can write $$\mathcal{E}=\mathcal{E}_1 \oplus \cdots \oplus \mathcal{E}_k \oplus \big(\mathcal{F}_1 \oplus \sigma^*(\overline{\mathcal{F}}_1)\big) \oplus \cdots \oplus (\mathcal{F}_1 \oplus \sigma^*(\overline{\mathcal{F}}_l))$$ with $\mathcal{E}_i \in \R \mathcal{N}^{st}(e_ir',e_id')$ and $\mathcal{F}_i \in \mathcal{N}^{st}(h_ir',h_id')$ for some $e_1,\dots,e_k,h_1,\dots,h_l$. Assume moreover that $\mathcal{E}_i \in \R \mathcal{N}^{st,\mathbb{H}}(e_ir',e_id')$ for $i=1,\dots,q$ and $\mathcal{E}_i \in \R \mathcal{N}^{st,\mathbb{R}}(e_ir',e_id')$ for $i=q+1,\dots,k$. Put $m':=e_1+ \cdots + e_q + 2h_1 +\cdots+2h_l$. Notice that we have $$\mathcal{E}_1 \oplus \cdots \oplus \mathcal{E}_q \oplus \big(\mathcal{F}_1 \oplus \sigma^*(\overline{\mathcal{F}}_1)\big) \oplus \cdots \oplus \big(\mathcal{F}_1 \oplus \sigma^*(\overline{\mathcal{F}}_l)\big) \in \R \mathcal{N}(m'r',m'd',0)$$ and thus $\mathcal{E}_{q+1} \oplus \cdots \oplus \mathcal{E}_k \in \R \mathcal{N}((m-m')r',(m-m')d',s) .$ Notice moreover that $\mathcal{E}_{q+1} \oplus \cdots \oplus \mathcal{E}_k$ can be endowed with a real structure, i.e.\ $\mathcal{E}_{q+1} \oplus \cdots \oplus \mathcal{E}_k \in G_{s}(r,d)$. In particular, if $m'=0$, we have $\mathcal{E} \in G_{s}(r,d)$. We deduce from it that, for every $s$ as above, we have 
\begin{equation*}
    \R \mathcal{N}(r,d,s) \quad = \quad G_s(r,d)\quad \cup \bigcup_{\substack{(m_1,m_2) \in \N_{>0}^2 \\ m_1+m_2=m \\ m_2d' \equiv d \ \mathrm{mod}\ 2}}\Imm\, f_{s,m_1,m_2}
\end{equation*}
We then have two cases to analyze: $d'$ even and $d'$ odd.
\begin{enumerate}

\item If $d'$ is even, then $d$ is even too and $m_2 d' \equiv d \equiv 0 \ \mathrm{mod}\ 2$ for every $m_1,m_2$ such that $m_1+m_2=m$. In this case, consider a geometrically stable real bundle $\mathcal{G} \in \R \mathcal{N}^{st,\R}(r',d',0)$ and a (geometrically) stable real bundle $\mathcal{E} \in \R \mathcal{N}^{st}(r',d',s)$. Put $\mathcal{W} \coloneqq \mathcal{G}^{m-1} \oplus \mathcal{E} .$ For every $(m_1,m_2)$ such that $m_1+m_2=m$, we have 
$
\mathcal{W}=f_{s,m_1,m_2}(\mathcal{G}^{\oplus m_1},\mathcal{G}^{m_2-1} \oplus \mathcal{E}).
$
By induction, each $\Imm\, f_{s,m_1,m_2}$ is connected, from which we deduce that 
$$\bigcup_{\substack{(m_1,m_2) \in \N_{>0}^2 \\ m_1+m_2=m \\ m_2d' \equiv d \ \mathrm{mod}\ 2}}\Imm\, f_{s,m_1,m_2}$$ is connected, too. Notice that $\mathcal{W}$ can be endowed with a real structure, so it belongs to $G_s(r,d)$. Thus $\R \mathcal{N}(r,d,s)$ is also connected.

\item If $d'$ is odd, we have two further subcases. 
    \begin{enumerate}
    \item If $d$ is also odd, i.e.\ if $m$ is odd, then for all $m_1,m_2$ such that $m_1+m_2=m$ and $m_2 d' \equiv d \ \mathrm{mod}\ 2$, we must have $m_1$ even and $m_2$ odd. Put $m=2m'+1$, $m_1=2m_1'$ and $m_2=2m_2'+1$. Consider a geometrically stable real bundle $\mathcal{G} \in \R \mathcal{N}^{st,\R}(2r',2d',0)$ and a (geometrically) stable real bundle $\mathcal{E} \in \R \mathcal{N}(r',d',s)$. Let
    $
    \mathcal{W} \coloneqq \mathcal{G}^{m'} \oplus \mathcal{E} .
    $
    Then $\mathcal{W}=f_{s,m_1,m_2}(\mathcal{G}^{\oplus m_1'},\mathcal{G}^{\oplus m_2'} \oplus \mathcal{E})$ for all $m_1,m_2$ such that $m_2$ is odd and $m_1+m_2=m$. By induction, each $\Imm\, f_{s,m_1,m_2}$ is connected and we deduce from it that 
    $$
    \bigcup_{\substack{(m_1,m_2) \in \N_{>0}^2 \\ m_1+m_2=m \\ m_2d' \equiv d \ \mathrm{mod}\ 2}}\Imm\, f_{s,m_1,m_2}
    $$
    is connected, too. Notice that $\mathcal{W}$ can also be endowed with a real structure, so it belongs to $G_s(r,d)$ as well. It follows that $\R \mathcal{N}(r,d,s)$ is connected.
    \item If $d$ is even then $m$ is even. Put $m=2m'$. Notice that given $m_1,m_2$ such that $m_2d' \equiv d \ \mathrm{mod}\ 2$ and $m_1+m_2=m$, we must have that $m_1,m_2$ are both even. Put $m_1=2m_1'$ and $m_2=2m_2'$. Consider geometrically stable real bundles $\mathcal{G} \in \R \mathcal{N}^{st}(2r',2d',0)$ and $\mathcal{E} \in \R\mathcal{N}^{st}(2r',2d',s)$. Define $\mathcal{W} \coloneqq \mathcal{G}^{\oplus m'-1} \oplus \mathcal{E}$. For all $m_1,m_2$, we then have $\mathcal{W}=f_{s,m_1,m_2}(\mathcal{G}^{\oplus m_1'},\mathcal{G}^{\oplus m_2'-1} \oplus \mathcal{E})$. Then the same argument as above shows that $\R \mathcal{N}(r,d,s)$ is connected.
    \end{enumerate}
\end{enumerate}
\end{proof}

When $\R X = \emptyset$, we summarize the results of the current section in Table \ref{CC_count_curves_without_real_pts}, which gives the number of connected components of $\R\mathcal{N}(r,d)$ for all $r$ and $d$. When $r$ and $d$ are coprime, the results are consistent with \cite[Theorem~1.1]{Sch_JSG}. As seen in Theorem \ref{CC_coprime_case}, when $\gcd(r,d) = 1$, the determinant morphism induces a bijection bijection $\pi_0(\R\mathcal{N}(r,d)) \simeq \pi_0(\R\Pic_d)$. But for general $r$ and $d$, it can happen that the map $c_{r,d} : \R\mathcal{N}(r,d) \to \pi_0(\R\Pic_d)$ is not surjective when $\R X = \emptyset$, and Table \ref{CC_count_curves_without_real_pts} below shows that this happens precisely when:
\begin{itemize}
    \item $\R X = \emptyset$, $g$ even, $d$ odd and $r$ even.
    \item $\R X = \emptyset$, $g$ odd, $d$ even and $r$ even.
\end{itemize}
Note that $c_{2r,2d}^{-1}(\R)$ contains quaternionic vector bundles, both for $g$ even and $g$ odd.
\begin{table}[!h]
    \centering{
    \begin{tabular}{|c|c|c|c|c|c|c|}
        \hline
        $g$ & $d$ & $r$ & $\R\mathcal{N}(r,d)$ & $|\pi_0(\R\mathcal{N}(r,d))|$ & $\R\Pic_d$ & $|\pi_0(\R\Pic_d)|$  \\
        \hline
          even & even & even & $c_{2r,2d}^{-1}(\R)$ & 1 & $\Pic_{2d}^\R$ & 1 \\
        \hline
          even & even & odd & $c_{2r+1,2d}^{-1}(\R)$ & 1 & $\Pic_{2d}^\R$ & 1 \\
        \hline
          even & odd & even & $\emptyset$ & 0 & $\Pic_{2d+1}^\H$ & 1 \\
        \hline
          even & odd & odd & $c_{2r+1,2d+1}^{-1}(\H)$ & 1 & $\Pic_{2d+1}^\H$ & 1 \\
        \hline
          odd & even & even & $c_{2r,2d}^{-1}(\R)$ & 1 & $\Pic_{2d}^\R \sqcup \Pic_{2d}^\H$ & 2 \\
        \hline
          odd & even & odd & $c_{2r+1,2d}^{-1}(\R) \sqcup c_{2r+1,2d}^{-1}(\H)$ & 2 & $\Pic_{2d}^\R \sqcup \Pic_{2d}^\H$ & 2 \\
        \hline
          odd & odd & even & $\emptyset$ & 0 & $\emptyset$ & 0 \\
        \hline
          odd & odd & odd & $\emptyset$ & 0 & $\emptyset$ & 0 \\
        \hline
    \end{tabular}
    \medskip
    \caption{Connected components of $\R \mathcal{N}(r,d)$ when $\R X = \emptyset$.}
    \label{CC_count_curves_without_real_pts}
    }
\end{table}

When $\R X \not=\emptyset$, the real loci $\R\mathcal{N}(r,d)$ and $\R\Pic_d$ both have $2^{n-1}$ connected components, where $n$ is the number of connected components of $\R X$, but $\R\mathcal{N}(2r,2d)$ contains quaternionic bundles, too, not just real bundles. By Lemma \ref{SW_class_for_quat_bundles}, these quaternionic vector bundles are all contained in the fibre $\R\mathcal{N}(2r,2d,0) := c_{2r,2d}^{-1}(0)$.

\begin{teorema}\label{CC_count_curves_with_real_pts}
    Let $X$ be a Klein surface of genus $g\geqslant 2$ such that $\R X$ has $n > 0$ connected components. Let $\mathcal{N}(r,d)$ be the moduli space of semistable vector bundles of rank $r\geq 1$ and degree $d\in\Z$ on $X$, endowed with its canonical real structure. Then the determinant morphism $\det : \R\mathcal{N}(r,d) \to \R\Pic_d$ induces a bijection $\pi_0(\R\mathcal{N}(r,d)) \simeq \pi_0(\R\Pic_d)$. In particular, the real locus $ \R\mathcal{N}(r,d) $ has $2^{n-1}$ connected components.
\end{teorema}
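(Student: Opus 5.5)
By Theorem \ref{mainresult}, the connected components of $\R\mathcal{N}(r,d)$ are exactly the non-empty fibres $\R\mathcal{N}(r,d,s) = c_{r,d}^{-1}(s)$, where $s$ ranges over $\pi_0(\R\Pic_d)$. Moreover, the map $\pi_0(\R\mathcal{N}(r,d)) \to \pi_0(\R\Pic_d)$ induced by $\det$ is precisely the map sending the component $c_{r,d}^{-1}(s)$ to $s$. So this map is injective by construction, and the whole statement reduces to showing that $c_{r,d}$ is surjective: for every $s = (s_1,\dots,s_n) \in (\Z/2\Z)^n$ with $s_1 + \cdots + s_n \equiv d \ \mathrm{mod}\ 2$, we need some point of $\R\mathcal{N}(r,d)$ whose determinant lies in the component of $\R\Pic_d$ indexed by $s$. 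The count $2^{n-1}$ then follows directly from the description of $\pi_0(\R\Pic_d)$ recalled in Section \ref{picardwithrealpoints}.

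For surjectivity, I would use the real structures $\tau_s$ introduced in Case 2 of the proof of Theorem \ref{mainresult}. Since $\R X \neq \emptyset$, Proposition \ref{propcriteriaexist} gives, for every $r$ and $d$, a $C^\infty$ real vector bundle $(E,\tau_s)$ of rank $r$ and degree $d$ with $w_1(E^{\tau_s}) = s$. The only constraint is $\sum s_i \equiv d \ \mathrm{mod}\ 2$, which is exactly the condition from \cite{BHH} that the paper uses to define $\tau_s$. By Theorem \ref{Lag_quot_connected}, the Lagrangian quotient $\mathcal{A}_{YM}^{\,\tau_s}/\mathcal{G}_h^{\tau_s}$ is non-empty, so $G_{\R,s}(r,d) = \Imm\, f_{\R,s}$ is non-empty. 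Any point of it is a semistable holomorphic structure compatible with $\tau_s$. Its determinant is the real line bundle $(\det E, \det\tau_s)$, and since $w_1(\det E^{\tau_s}) = w_1(E^{\tau_s}) = s$, it lies in the component of $\R\Pic_d$ labelled $s$. Hence $s \in \Imm\, c_{r,d}$.

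Finally, quaternionic bundles (which occur when $r$ and $d$ are both even) create no extra components. By Lemma \ref{SW_class_for_quat_bundles} they all land in $c_{r,d}^{-1}(0)$, and that fibre is connected by Theorem \ref{mainresult}. The only step needing care is the identification $w_1(E^{\tau}) = w_1(\det E^{\tau})$ together with the Gross--Harris indexing of $\pi_0(\R\Pic_d)$ by $w_1$, so that the real invariant of $(E,\tau_s)$ matches the label of the component of $\R\Pic_d$. This is standard, since $\det$ commutes with taking real parts and $w_1$ of a real vector bundle equals $w_1$ of its determinant line bundle. I therefore expect no serious obstacle: all the substance is already contained in Theorems \ref{mainresult} and \ref{Lag_quot_connected}.
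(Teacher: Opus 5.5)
Your proposal is correct and follows essentially the paper's route: you get connectedness of the fibres $c_{r,d}^{-1}(s)$ from Theorem \ref{mainresult}, and surjectivity of $c_{r,d}$ from the non-emptiness of $G_{\R,s}(r,d) = \Imm\, f_{\R,s} \subseteq \R\mathcal{N}(r,d,s)$ for every admissible $s$ (via the smooth real bundle $(E,\tau_s)$ and Theorem \ref{Lag_quot_connected}), exactly as in Case 2 of that proof. Your closing remark on quaternionic bundles matches the paper's use of Lemma \ref{SW_class_for_quat_bundles}, though it is not logically needed once the fibres are known to be connected.
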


\begin{remark}
While the statement of Theorem \ref{mainresult} can be formulated for any curve $X$, its proof as stated above holds only under the condition that the genus $g$ of the curve $X$ is greater than or equal to $2$. 
In the case when $g=0$, i.e.\ $X=\mathbb{P}^1_{\C}$, the set $\mathcal{N}(r,d)$ is non-empty if and only if $r \mid d$, in which case it consists of a single point. There are two possible real structures on $\mathbb{P}^1_{\C}$, namely $[z_0 : z_1]\mapsto [\ov{z_0}, \ov{z_1}]$ and $[z_0 : z_1] \mapsto [-\ov{z_1} : \ov{z_0}]$, and in both cases, $\R\mathcal{N}(r,d)$ is connected and non-empty whenever $r \mid d$. In particular, $\R \Pic_d$ is always a point and we see that Theorem \ref{mainresult} still holds when $g=0$. The case when $g=1$ is  more involved. For a description of the real points of $\R\mathcal{N}(r,d)$ in the case when $X$ is a real elliptic curve, see for instance \cite{BS}. And for the connectedness, we proved in \cite{Sch-Sco} that, if $\gcd(r,d)=2$, then for every $s \in \pi_0(\mathbb{R}\Pic_d)$, $\R\mathcal{N}(r,d,s)$ is connected. In particular, Theorem \ref{mainresult} extends to this case as well. 
\end{remark}

\section{Branes in Hitchin moduli spaces} \label{sectionà-Higgs}

\subsection{Moduli spaces of Higgs bundles}

A Higgs bundle on $X$ is a pair $(E,\phi)$ consisting of a holomorphic vector bundle $E$ on $X$ and a morphism of $O_X$-modules $\phi : E \to E \otimes \Omega^1_X$, known as a Higgs field. The degree  and rank of a Higgs bundle $(E,\phi)$ are the degree and rank of the underlying vector bundle $E$. A morphism of Higgs bundles $f:(E_1,\phi_1) \to (E_2,\phi_2)$ is a morphism of vector bundles $f:E_1 \to E_2$ that makes the following diagram commute.
$$
\begin{tikzcd}
    E_1 \ar[d,"f"'] \ar[r, "\phi_1"'] & E_1 \otimes \Omega^1_X\ar[d, "f \otimes \mathrm{Id}_{\Omega^1_X}"']  \\
    E_2 \ar[r, "\phi_2"'] & E_2 \otimes \Omega^1_X
\end{tikzcd}
$$
The Higgs bundle $(E,\phi)$ is called semistable if for all non-zero sub-bundle $F \subset $ such that $\phi(F) \subset F \otimes \Omega^1_X$, $\mu(F) \leq \mu(E)$. For all $r$ and $d$, there exists a quasi-projective complex algebraic variety $\MD(r,d)$,  of dimension $2(r^2(g-1) + 1)$, parameterizing $S$-equivalence classes of semistable Higgs bundles of rank $r$ and degree $d$ over $X$. Following Simpson \cite{Simpson_local_systems}, we will refer to these varieties as Dolbeault moduli spaces. 

\smallskip

The real structure $\sigma : X \to X$ induces a real structure on $\MD(r,d)$, defined by
\begin{equation}\label{Dol_can_real_structure}
\sigma([E,\phi])=[\sigma^*(\overline{E}),\sigma^*(\overline{\Phi})].
\end{equation}
This is an anti-holomorphic involution whose set of real points will be denoted by $\R \MD(r,d)$. If $\gcd(r,d) = 1$, it has a canonical structure of real analytic manifold of real dimension $2(r^2(g-1) + 1)$. The map $c_{r,d}$ defined in \eqref{obstruction_map} extends to a map
\begin{equation}\label{obstruction_map_Dolbeault}
\begin{array}{rcl}
    \widehat{c_{r,d}} : \R\MD(r,d) & \longrightarrow & \pi_0(\R\Pic_d) \\
    (E,\phi) & \longmapsto & [\det(E)]
\end{array}
\end{equation}
and we set, for all $s \in \pi_0(\R \Pic_d)$, $\R \MD(r,d,s) := \widehat{c_{r,d}}^{-1}(s)$. We then have an open-closed decomposition
$$
\R \MD(r,d) = \bigsqcup_{c \in \Imm{\widehat{c_{r,d}}}} \widehat{c_{r,d}}^{-1}(s)
$$
and we will show that this is in fact the decomposition of the real locus $\R\MD(r,d)$ into connected components. More precisely, we will show that $\Imm{\widehat{c_{r,d}}} = \Imm{c_{r,d}}$ and that, for all $s \in \pi_0(\R\Pic_d)$, the space $\R\MD(r,d,s)$ is connected. Since $\R\mathcal{N}(r,d)$ can be identified with the subset of all $(E,\phi) \in \R\MD(r,d)$ for which $\phi = 0$, it suffices to show Theorem \ref{theoremHiggsbundles} below. This improves on \cite{Sch-Sco}, where the result was shown under the assumption that $r$ and $d$ were coprime, in which case there is a modular description of the varieties $\R \MD(r,d,s)$ as moduli spaces of real/quaternionic Higgs bundles of fixed topological type.

\begin{teorema}\label{theoremHiggsbundles}
For all $s \in \pi_0(\R \Pic_d)$, the subspace $\R \MD(r,d,s)$ is the connected component of $\R\MD(r,d)$ containing $\R\mathcal{N}(r,d,s)$.
\end{teorema}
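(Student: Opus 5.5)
The plan is to use the $\C^*$-action on $\MD(r,d)$, restricted to $\R^*_{>0}$, to deform every real Higgs bundle onto the real locus of $\mathcal{N}(r,d)$ (or at least into a set meeting it). Then Theorem \ref{mainresult} gives connectedness. Throughout, note that $\widehat{c_{r,d}}$ is continuous and locally constant, so each $\R\MD(r,d,s)$ is open and closed in $\R\MD(r,d)$. It therefore suffices to show that every point of $\R\MD(r,d,s)$ lies in the same connected component as $\R\mathcal{N}(r,d,s)$. We also need $\R\mathcal{N}(r,d,s) \neq \emptyset$ whenever $\R\MD(r,d,s)\neq\emptyset$, which will give $\Imm\,\widehat{c_{r,d}} = \Imm\, c_{r,d}$.

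First step: the action $t\cdot[E,\phi] = [E,t\phi]$ for $t\in\R_{>0}$ commutes with the real structure \eqref{Dol_can_real_structure}, since $\sigma^*\overline{t\phi} = t\,\sigma^*\overline{\phi}$ for real $t$. Hence it preserves $\R\MD(r,d)$, and it preserves each fibre $\R\MD(r,d,s)$ because it does not change $\det E$. By properness of the Hitchin map $h:\MD(r,d)\to \bigoplus_i H^0(X,K^i)$ (Nitsure, Simpson), the limit $\lim_{t\to 0} t\cdot[E,\phi]$ exists in $\MD(r,d)$. It is a fixed point of the $\C^*$-action, and it lies in $\R\MD(r,d,s)$ because that set is closed and the path $t\mapsto t\cdot [E,\phi]$, $t\in(0,1]$, stays inside it. The path extends continuously to $t=0$, so every point of $\R\MD(r,d,s)$ is joined by a path to a real $\C^*$-fixed point with the same determinant class.

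Second step, which I expect to be the main obstacle: the limit points need not lie in $\mathcal{N}(r,d)$, since the fixed locus also contains variations of Hodge structure $(E=\bigoplus E_j,\ \phi:E_j\to E_{j+1}\otimes K)$ with $\phi\neq 0$. To handle this I would use the Morse-theoretic picture of \cite{Sch-Sco} together with the upward flows. Hitchin's function $\|\phi\|^2$ is a proper $\sigma$-invariant function whose only local minima are the points with $\phi=0$. At a non-minimal real critical point, the downward Morse directions form a complex subspace, namely the positive-weight part of the $\C^*$-action on the deformation complex, and it is $\sigma$-stable. Its real form is therefore nonzero, so there is a real downward direction. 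I would show this by adapting \cite[Sect.~4]{Sch-Sco}, where the coprime case is done, to the singular setting; concretely, one can deform a real VHS by varying the grading.

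A more robust alternative route is to write a real polystable Higgs bundle, as in Proposition \ref{polystable-real}, as a sum of stable real/quaternionic Higgs bundles and pairs $\mathcal{F}\oplus\sigma^*\overline{\mathcal{F}}$. The stable summands have coprime-type behaviour handled by \cite{Sch-Sco}. The pairs can be connected to pairs with $\phi=0$ by connecting $\mathcal{F}$ to a bundle with zero Higgs field inside the connected space $\MD$, which is possible since $\MD(h r',h d')$ is connected. In each case the resulting path stays in the fibre of $\widehat{c_{r,d}}$.

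Conclusion: each point of $\R\MD(r,d,s)$ is connected by a path to a point of $\R\mathcal{N}(r,d,s)$. In particular $\R\mathcal{N}(r,d,s)\ne\emptyset$, which gives $\Imm\,\widehat{c_{r,d}}=\Imm\,c_{r,d}$. Since $\R\mathcal{N}(r,d,s)$ is connected by Theorem \ref{mainresult}, $\R\MD(r,d,s)$ is connected. Being open and closed, it is the connected component of $\R\MD(r,d)$ containing $\R\mathcal{N}(r,d,s)$.
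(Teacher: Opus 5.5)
Your overall strategy is the paper's. You flow a real point along the $\R_{>0}$-action to a real fixed point, descend through real Bialynicki-Birula flow lines (which stay in the fibre of $\widehat{c_{r,d}}$) until you land in $\R\mathcal{N}(r,d,s)$, and conclude with Theorem~\ref{mainresult} and the open-closed decomposition. The paper imports the descent step from \cite{Sch-Sco}, using the structure of real $\R^*$-fixed points and noting that this part never used coprimality.

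The gap is that your own justification of the descent fails exactly in the new, non-coprime situation. At a strictly semistable real fixed point $p$, the space $\MD(r,d)$ is singular. Locally it is the quotient by $\mathrm{Aut}(p)$ of the quadratic cone in the hypercohomology $\mathbb{H}^1$ of the deformation complex. A nonzero real vector in the downward weight subspace therefore need not produce a real Higgs bundle $x\neq p$ with $\lim_{t\to\infty} t\cdot x=p$: it may be obstructed, or it may lie in the null cone of $\mathrm{Aut}(p)$. For example, take $p=p_1\oplus p_2$ with $p_1\not\simeq p_2$ stable VHS of the same slope. The element $(\lambda,1)\in\mathrm{Aut}(p)$ scales $\mathbb{H}^1(\Hom(p_2,p_1))$ by $\lambda$, so every such extension class, whatever its weight, represents $p$ itself in the moduli space. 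Hence ``the real form of the downward subspace is nonzero'' proves nothing at singular points, and ``varying the grading'' does not construct the required $x$.

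The missing input is the structure of real fixed points. A real polystable VHS with $\phi\neq 0$ splits, as in the Higgs analogue of Proposition~\ref{polystable-real}, into geometrically stable real or quaternionic VHS and pairs $\mathcal{F}\oplus\sigma^*\ov{\mathcal{F}}$. Each summand is itself $\C^*$-fixed, and at least one has nonzero Higgs field. That summand is a smooth point of its own Dolbeault moduli space, where your tangent-space argument does work. Take a real point other than it in its downward set (or $y\oplus\sigma^*\ov{y}$ with $y$ in the downward set of $\mathcal{F}$), keep the other summands fixed, and let $t\to 0$. You should also justify termination. The value of $\|\phi\|^2$ at a VHS is determined by the ranks and degrees of its graded pieces, so it lies in a discrete subset of $[0,\infty)$ and the strictly decreasing descent ends on $\R\mathcal{N}(r,d,s)$.

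Your ``alternative route'' also does not work as written. A geometrically stable real or quaternionic Higgs bundle of rank $hr'$ and degree $hd'$ with $h>1$ is not of coprime type. Its limit as $t\to 0$ can be strictly semistable, and \cite{Sch-Sco} does not cover it. Since this is the generic case, that route does not avoid the difficulty.
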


The result above is a consequence of \cite[Theorem 3.11]{Sch-Sco} and Theorem \ref{mainresult}. We give a sketch of proof here for the sake of completeness.

\begin{proof}
The idea is to show the existence of a path between a Galois-invariant Higgs bundle $(E,\phi) \in \R\MD(r,d,s)$ and the (connected) subvariety $\R\mathcal{N}(r,d,s)$. The difference with \cite{Sch-Sco} is that, in that paper, we only knew that $\mathcal{N}(r,d,s)$ was connected when $\gcd(r,d) = 1$ (but the rest of the argument of \cite[Theorem 3.11]{Sch-Sco} did not use that assumption). The path between the Galois-invariant Higgs bundle $(E,\phi) \in \R\MD(r,d,s)$ and $\R\mathcal{N}(r,d,s)$ is constructed via a series of real Bialynicki-Birula flows $f : \R\mathbf{P}^1 \to \R\MD(r,d,s)$, the point being that these stay contained in the fibre $\widehat{c_{r,d}}^{-1}(s)$ from which they start (\cite[Proposition~3.10]{Sch-Sco}). To construct these real flows, we need to understand the structure of $\R^*$-fixed points in $\R\MD(r,d,s)$ (see \cite[Theorem~3.8]{Sch-Sco}). We then obtain the inclusion $\Imm\,\widehat{c_{r,d}} \subset \in \Imm\, c_{r,d}$, the converse inclusion being obvious since $\widehat{c_{r,d}}|_{\R\mathcal{N}(r,d)} = c_{r,d}$. Since $\R\mathcal{N}(r,d,s)$ is connected, any two points of $\R\MD(r,d,s)$ can be related via a path, as illustrated in Figure \ref{fig_realBBflows}. So $\R\MD(r,d,s)$ is connected and it is the connected component of $\R\MD(r,d)$ containing $\R\mathcal{N}(r,d,s)$.
\end{proof}

\begin{figure}[!h]
    \centering
    
\begin{tikzpicture}[scale=1.2]
    \fill[blue!30] (-1,0) -- (-3,4) to (3,4) -- (1,0) -- cycle;
    
    \draw[very thick] (-1.75,1.5) -- (-1.25,0.5);
    \draw[very thick] (-2,2) -- (-3,4);
    \draw[very thick] (2,2) -- (3,4);
    \draw[very thick] (1.75,1.5) -- (1.25,0.5);
    \draw[very thick] (-1,0) -- (1,0);
    
    \node at (-2.75,0.6) {$\mathbb{R}\mathrm{M}_{\mathrm{Dol}}(r,d,s)^{\mathbb{R}^*}$};
    \node at (0,3.25) {Real locus of the nilpotent cone};
    \node at (-1.5,-0.25) {$\mathbb{R}\mathcal{N}(r,d,s)$};
    
    \node[circle,fill,inner sep=1.5pt] at (-4.1,3.525) {};
    \node at (-4.7,3.5) {$(E_1,\phi_1)$};
            
    \node[circle,fill,inner sep=1.5pt] at (-2.45,2.9) {};
    
    \draw[-Triangle, thick, line join=round,
decorate, decoration={
    zigzag,
    segment length=8,
    amplitude=2,post=lineto,
    post length=2pt
}] 
        (-4.1,3.5) to (-2.6,3);
    
    \node[circle,fill,inner sep=1.5pt] at (-1,2.05) {};

    \node[circle,fill,inner sep=1.5pt] at (-1.6,1.2) {};

    \node[circle,fill,inner sep=1.5pt] at (-0.45,1) {};
    
    \draw[-Triangle, thick, line join=round,
decorate, decoration={
    zigzag,
    segment length=8,
    amplitude=2,post=lineto,
    post length=2pt
}] 
        (-1,2.05) to (-2.28,2.8);

    \node[circle,fill,inner sep=1.5pt] at (2.5,1) {};
            
    \node[circle,fill,inner sep=1.5pt] at (1.45,0.9) {};
    \node at (3.2,1) {$(E_2,\phi_2)$};
    
    \node[circle,fill,inner sep=1.5pt] at (-0.3,0) {};

    \draw[-Triangle, thick, line join=round,
decorate, decoration={
    zigzag,
    segment length=8,
    amplitude=2,post=lineto,
    post length=2pt
}] 
        (2.5,1) to (1.6,0.9);

    \draw[-Triangle, thick, line join=round,
decorate, decoration={
    zigzag,
    segment length=8,
    amplitude=2,post=lineto,
    post length=2pt
}] 
        (-1,2) to (-1.55,1.3);

    \draw[-Triangle, thick, line join=round,
decorate, decoration={
    zigzag,
    segment length=8,
    amplitude=2,post=lineto,
    post length=2pt
}] 
         (-0.5,1) to (-1.55,1.2);

    \draw[-Triangle, thick, line join=round,
decorate, decoration={
    zigzag,
    segment length=8,
    amplitude=2,post=lineto,
    post length=2pt
}] 
        (-0.5,1) to (-0.325,0.08);
        
    \node[circle,fill,inner sep=1.5pt] at (0.7,0.7) {};
    
    \draw[-Triangle, thick, line join=round,
decorate, decoration={
    zigzag,
    segment length=8,
    amplitude=2,post=lineto,
    post length=2pt
}] 
        (0.7,0.65) to (1.35,0.9) ;

    \node[circle,fill,inner sep=1.5pt] at (0.25,0) {};
    
    \draw[-Triangle, thick, line join=round,
decorate, decoration={
    zigzag,
    segment length=8,
    amplitude=2,post=lineto,
    post length=2pt
}] 
        (0.65,0.65) to (0.35,0.1);

\end{tikzpicture}

    \caption{Real Bialynicki-Birula flows in $\mathbb{R}\mathrm{M}_{\mathrm{Dol}}(r,d,s)$.}
    \label{fig_realBBflows}
\end{figure}

\begin{corollary}\label{CC_branes}
    The number of connected components of the real locus $\R\MD(r,d)$ of the Dolbeault moduli space is given, for arbitrary $r$ and $d$, by Table \ref{CC_count_curves_without_real_pts} when $\R X = \emptyset$ and by Theorem \ref{CC_count_curves_with_real_pts} when $\R X \neq \emptyset$. In particular, when $\R X$ has $n>0$ connected components, then $\R\MD(r,d)$ has $2^{n-1}$ connected components.
\end{corollary}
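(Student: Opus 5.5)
The plan is to deduce Corollary \ref{CC_branes} directly from Theorem \ref{theoremHiggsbundles}, Theorem \ref{mainresult}, Table \ref{CC_count_curves_without_real_pts} and Theorem \ref{CC_count_curves_with_real_pts}. The goal is to show that the inclusion $\R\mathcal{N}(r,d)\hookrightarrow\R\MD(r,d)$, given by $E\mapsto(E,0)$, induces a bijection on $\pi_0$. Once this holds, $|\pi_0(\R\MD(r,d))| = |\pi_0(\R\mathcal{N}(r,d))|$, and the counts follow from the results already established for $\R\mathcal{N}(r,d)$.

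First, I use the open-closed decomposition $\R\MD(r,d)=\bigsqcup_{s}\widehat{c_{r,d}}^{-1}(s)$. It holds because $\widehat{c_{r,d}}$ is continuous: it is the composite of the determinant map $(E,\phi)\mapsto\det E$, which lands in $\R\Pic_d$, with the projection to the discrete set $\pi_0(\R\Pic_d)$.

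By Theorem \ref{theoremHiggsbundles}, each nonempty fibre $\R\MD(r,d,s)$ is connected. Since the fibres are also open and closed, they are exactly the connected components of $\R\MD(r,d)$.

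Next I need the equality $\Imm\,\widehat{c_{r,d}}=\Imm\,c_{r,d}$.
\begin{itemize}
\item The inclusion $\supseteq$ is immediate, since $\widehat{c_{r,d}}$ restricts to $c_{r,d}$ on the locus $\phi=0$.
\item For $\subseteq$, let $s$ be such that $\R\MD(r,d,s)\neq\emptyset$. Theorem \ref{theoremHiggsbundles} says this fibre is the connected component containing $\R\mathcal{N}(r,d,s)$, so I read it as asserting that $\R\mathcal{N}(r,d,s)\neq\emptyset$.
\end{itemize}
This second inclusion is the only delicate point, and I expect it to be the main obstacle. If the statement of Theorem \ref{theoremHiggsbundles} is not taken to guarantee nonemptiness, I would instead invoke its proof. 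The real $\R^*$-flow $(E,\phi)\mapsto(E,t\phi)$ preserves $\R\MD(r,d,s)$, and its limit as $t\to 0$ lies in the real $\R^*$-fixed locus. Following the chain of real Bialynicki-Birula flows of \cite[Proposition~3.10]{Sch-Sco} downward, one eventually reaches a point with $\phi=0$, all within the same fibre. Hence $s\in\Imm\,c_{r,d}$.

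Combining these steps, $\pi_0(\R\MD(r,d))\simeq\Imm\,\widehat{c_{r,d}}=\Imm\,c_{r,d}\simeq\pi_0(\R\mathcal{N}(r,d))$, where the last bijection is Theorem \ref{mainresult}. When $\R X=\emptyset$, the count is therefore read off from Table \ref{CC_count_curves_without_real_pts}. When $\R X$ has $n>0$ components, Theorem \ref{CC_count_curves_with_real_pts} gives $\Imm\,c_{r,d}=\pi_0(\R\Pic_d)$, which has cardinality $2^{n-1}$ by Section \ref{picardwithrealpoints}. This yields the last assertion of the corollary.
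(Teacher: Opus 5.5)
Your proposal is correct and follows the paper's own route. The corollary is deduced from Theorem~\ref{theoremHiggsbundles}, which shows that the fibres $\R\MD(r,d,s)$ are the connected components, together with $\Imm\,\widehat{c_{r,d}}=\Imm\,c_{r,d}$; the paper also extracts the nontrivial inclusion from the real Bialynicki-Birula flows in that theorem's proof, and then reads off the counts from Theorem~\ref{mainresult}, Table~\ref{CC_count_curves_without_real_pts} and Theorem~\ref{CC_count_curves_with_real_pts}, exactly as you do.
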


\subsection{Character varieties and branes in the Hitchin moduli space}

When $d=0$, the nonabelian Hodge correspondence establishes a homeomorphism $\MD(r,0) \simeq \MB(r,0)$ between the moduli space of semistable Higgs bundles of rank $r$ and degree $0$ and the so-called Betti moduli space $\MB(r,0) := \Hom\big(\pi_1 X ; \Gl(r,\C)\big) /\negmedspace/ \C$ which is an affine algebraic variety over $\C$. When 
$X$ has a real structure $\sigma$, there is an associated order $2$ element in $\mathrm{Out}(\pi_1 X)$, induced by the group morphism $\sigma_* : \pi_1(X, x) \to \pi_1 (X, \sigma(x))$. Equivalently, this order $2$ element of $\mathrm{Out}(\pi_1 X)$ is constructed from the short exact sequence $1 \to \pi_1 X \to \pi_1^{orb}(X/\left< \sigma \right>) \to \left< \sigma \right> \to 1$ by lifting the non-trivial element of $\left< \sigma \right> \simeq \Z/2\Z$ and conjugating by it on the normal subgroup $\pi_1 X$ of the orbifold fundamental group of $X/\left< \sigma \right>$. Since $\mathrm{Out}(\pi_1 X)$ acts on $\MB(r,0)$, we get an involutive automorphism of the Betti moduli space. Combining it with the canonical real structure $g \mapsto \ov{g}$ of $\Gl(r,\C)$, we get a real structure 
$$
\begin{array}{ccc}
    \beta : \MB(r,0) & \longrightarrow & \MB(r,0) \\
    \left[ \varrho \right] & \longmapsto & \left[ \ov{\varrho \circ \sigma_*} \right]
\end{array}
$$
which, through the nonabelian Hodge correspondence, is conjugate to the real structure \eqref{Dol_can_real_structure} of $\MD(r,0)$. The real locus of these anti-holomorphic involutions therefore defines an $(A,A,B)$-brane in the hyperkähler quotient $\mathrm{M}_\mathrm{Hit}(r,0)$, consisting of gauge equivalence classes of solutions to the Hitchin equations (\cite{ba2,BGP}). If we combine the order $2$ element of $\mathrm{Out}(\pi_1X)$ with the holomorphic (outer) involution $g \mapsto \ ^t g^{-1}$ of $\Gl(r,\C)$, we get an algebraic involution
$$
\begin{array}{ccc}
    \widetilde{\beta} : \MB(r,0) & \longrightarrow & \MB(r,0) \\
    \left[ \varrho \right] & \longmapsto & \left[ \ ^t\big(\varrho \circ \sigma_*\big)^{-1} \right]
\end{array}
$$
which, through the nonabelian Hodge correspondence, is conjugate to the real structure $(E, \varphi) \mapsto (\sigma^*(\ov{E}, - \sigma^*(\ov\phi))$ of $\MD(r,0)$ and therefore defines an $(A,B,A)$-brane in $\mathrm{M}_\mathrm{Hit}(r,0)$. We then obtain the following results as a consequence of Theorem \ref{theoremHiggsbundles}.

\begin{teorema}\label{CC_branes2}
    The fixed-point sets of $\beta$ and $\widetilde{\beta}$ are homeomorphic. If $\R X = \emptyset$, the number of connected components of these fixed-point sets is given by Table \ref{CC_count_curves_without_real_pts}. If $\R X$ has $n > 0$ connected components, $\mathrm{Fix}(\beta)$ and $\mathrm{Fix}(\widetilde{\beta})$ have $2^{n-1}$ connected components.
\end{teorema}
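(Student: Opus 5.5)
The plan is to compare the two involutions $\beta$ and $\widetilde{\beta}$ on the Dolbeault side, where they become $\beta_D : (E,\phi) \mapsto (\sigma^*\ov{E}, \sigma^*\ov{\phi})$ and $\widetilde{\beta}_D : (E,\phi) \mapsto (\sigma^*\ov{E}, -\sigma^*\ov{\phi})$ on $\MD(r,0)$. Since the nonabelian Hodge correspondence is a homeomorphism $\MD(r,0) \simeq \MB(r,0)$ intertwining $\beta$ with $\beta_D$ and $\widetilde{\beta}$ with $\widetilde{\beta}_D$ (as recalled just before the statement), it suffices to prove that $\mathrm{Fix}(\beta_D)$ and $\mathrm{Fix}(\widetilde{\beta}_D)$ are homeomorphic.

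For this we use the $\C^*$-action on $\MD(r,0)$, $\lambda\cdot(E,\phi) = (E,\lambda\phi)$, which is continuous; multiplication by $i$ is a homeomorphism $\mu_i$ of $\MD(r,0)$. We check the conjugation relation
$$
\mu_i \circ \beta_D \circ \mu_i^{-1}(E,\phi) = \mu_i(\sigma^*\ov{E}, \sigma^*\ov{(-i\phi)}) = (\sigma^*\ov{E}, i\cdot i\,\sigma^*\ov{\phi}) = \widetilde{\beta}_D(E,\phi),
$$
using that $\sigma^*\ov{(\lambda\phi)} = \ov{\lambda}\,\sigma^*\ov{\phi}$ by antilinearity of the conjugation. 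Hence $\mu_i$ maps $\mathrm{Fix}(\beta_D)$ homeomorphically onto $\mathrm{Fix}(\widetilde{\beta}_D)$. The only care needed is to verify this on $S$-equivalence classes, which is immediate because scaling the Higgs field preserves semistability and the graded object. Transporting back through nonabelian Hodge gives the homeomorphism $\mathrm{Fix}(\beta) \simeq \mathrm{Fix}(\widetilde{\beta})$.

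For the count, $\mathrm{Fix}(\beta) \simeq \mathrm{Fix}(\beta_D) = \R\MD(r,0)$. By Theorem \ref{theoremHiggsbundles} and Corollary \ref{CC_branes}, $\pi_0(\R\MD(r,0))$ is in bijection with $\Imm\, c_{r,0}$. This number is read off from Table \ref{CC_count_curves_without_real_pts} (rows with $d$ even) when $\R X = \emptyset$, and equals $2^{n-1}$ by Theorem \ref{CC_count_curves_with_real_pts} when $n>0$.

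The main point requiring care is the claim that nonabelian Hodge intertwines the involutions, in particular that $\widetilde{\beta}$ corresponds to the sign-changed Higgs field. We take this as established in the discussion preceding the statement and in \cite{BGP}; everything else is formal.
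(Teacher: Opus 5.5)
Your proposal is correct and follows essentially the same route as the paper. The paper also conjugates the two real structures on $\MD(r,0)$ by the order-four automorphism $(E,\phi)\mapsto(E,i\phi)$, and then reads off the number of components from Corollary \ref{CC_branes}.
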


\begin{proof}
    It suffices to observe that, on the Dolbeault moduli space $\MD(r,0)$, the real structure $(E, \phi) \mapsto (\sigma^*(\ov{E}), \sigma^*(\ov{\phi}))$ and $(E, \phi) \mapsto (\sigma^*(\ov{E}), -\sigma^*(\ov{\phi}))$ are conjugate via the order four automorphism $(E, \phi) \mapsto (E, i \phi)$. The conclusion then follows from Corollary \ref{CC_branes}.
\end{proof}

The fixed-point sets $\mathrm{Fix}(\beta)$ and $\mathrm{Fix}(\widetilde{\beta})$ have attracted interest \cite{BHH, BGH} because they contain representations $\varrho: \pi_1 X \to \Gl(r,\C)$ that extend to the orbifold fundamental group $\pi_1^{orb}(X/\left<\sigma\right>)$ in a way that makes the following diagram commute.
$$
\begin{tikzcd}
    1 \ar[r] & \pi_1 X \ar[r] \ar[d] & \pi_1^{orb}(X/\left<\sigma\right>) \ar[r] \ar[d] & \left<\sigma\right> \ar[d] \ar[r] & 1  \\
    1 \ar[r] & \Gl(r,\C) \ar[r] & \Gl(r,\C) \rtimes \left<\sigma\right> \ar[r] & \left<\sigma\right> \ar[r] & 1
\end{tikzcd}
$$
where the action of $\left<\sigma\right>$ in the semi-direct product $\Gl(r,\C) \rtimes \left<\sigma\right>$ is given by $g \mapsto \ov{g}$ for the real structure $\beta$ and for $g \mapsto \ ^t g^{-1}$ for the algebraic involution $\widetilde{\beta}$. Note that we can define the map $\widehat{c_{r,0}} : \R \MD(r,0) \to \pi_0(\R\Pic_0)$ from \eqref{obstruction_map_Dolbeault} directly on $\R\MB(r,0)$, but we have to separate the cases when $\R X \neq \emptyset$ and $\R X = \emptyset$. Indeed, if the real locus $\R X$ consists of $n > 0$ circles $(\gamma_i)_{1 \leqslant i \leqslant n}$ and $[\varrho] \in \R\MB(r,0)$, then $\det \varrho(\gamma_i) \in \R^*$, and the associated sign $\pm 1$ corresponds to the Stiefel-Whitney classes of the real line bundle corresponding to $\det(\varrho) : \pi_1 X \to \C^*$ (see \cite[Section~4.3]{Sch_JDG} for details). Similarly, when $\R X = \emptyset$, we can choose a path between $x$ and $\sigma(x)$ to construct an element $\mathrm{sgn}(\det(\rho))\in\{\pm 1\}$ (see \cite{Sco_GD}). The resulting map $o_r : \R\MB(r,0) \to \pi_0(\R^*)$ is an obstruction map in the sense of Goldman \cite{Goldman_symp_nature} and Li \cite{JunLi_GroupReps} and we have proven that the fibres of $o_r$ are connected, giving the decomposition of $\MB(r,0)$ into connected components. For $d \neq 0$, we can replace $\pi_1 X$ by a central extension $\pi_d$ and obtain analogous statements. This was done in \cite[Theorems 4.2 and 4.5]{Sch-Sco} in the case when $\gcd(r,d) = 1$ and, as an application of Theorem \ref{theoremHiggsbundles}, these results can now be generalized to all $r$ and $d$.

\end{document}